\newcommand{\leqnomode}{\tagsleft@true}
\newcommand{\reqnomode}{\tagsleft@false}
\newtheorem{theorem}{Theorem}[section]
\newtheorem{corollary}[theorem]{Corollary}
\newtheorem{definition}[theorem]{Definition}
\newtheorem{lemma}[theorem]{Lemma}
\newtheorem{remark}[theorem]{Remark}
\newenvironment{proof}[1][Proof]{\textbf{#1.} }{\hfill\rule{0.5em}{0.5em}}
{\catcode`\@=11\global\let\AddToReset=\@addtoreset
\AddToReset{equation}{section}

\AddToReset{theorem}{section}

\title{Weighted distribution approach to gradient estimates for quasilinear elliptic double-obstacle problems in Orlicz spaces}
\author{Thanh-Nhan Nguyen\thanks{Department of Mathematics, Ho Chi Minh City University of Education, Ho Chi Minh City, Vietnam; \texttt{nhannt@hcmue.edu.vn}}, Minh-Phuong Tran\footnote{Corresponding author.} \thanks{Applied Analysis Research Group, Faculty of Mathematics and Statistics, Ton Duc Thang University, Ho Chi Minh City, Vietnam; \texttt{tranminhphuong@tdtu.edu.vn}}}
\date{\today}

\begin{document}
 
\maketitle
\begin{abstract}

We construct an efficient approach to deal with the global regularity estimates for a class of elliptic double-obstacle problems in Lorentz and Orlicz spaces. The motivation of this paper comes from the study on an abstract result in the viewpoint of the fractional maximal distributions  and this work also extends some regularity results proved in \cite{PN_dist} by using the weighted fractional maximal distributions (WFMDs). We further investigate a pointwise estimates of the gradient of weak solutions via fractional maximal operators and Riesz potential of data. Moreover, in the setting of the paper, we are led to the study of problems with nonlinearity is supposed to be partially weak BMO condition (is measurable in one fixed variable and only satisfies locally small-BMO seminorms in the remaining variables).

\medskip

\medskip

\medskip

\noindent 

\medskip

\noindent Keywords: double-obstacle problem; quasilinear elliptic; gradient estimate; weighted distribution; Orlicz spaces. 

\end{abstract}   
                  
\tableofcontents

\section{Introduction and Main results}
\label{sec:intro}
1.1. \textit{The problem statement.} The aim of this article is to study the global regularity estimates for weak solutions to quasilinear elliptic double-obstacle problem associated with the operator
\begin{align*}
\mathcal{L}u = -\mathrm{div}\mathcal{A}(x,\nabla u) \quad \text{in} \  \Omega,
\end{align*}
in the setting of both weighted Lorentz and Orlicz-Lorentz spaces, where $\Omega$ is an open bounded domain of $\mathbb{R}^n$ ($n \ge 2$) and $\mathcal{A}:\Omega \times \mathbb{R}^n \to \mathbb{R}^n$ is a Carath\'eodory function (that is continuous with respect to $\xi \in \mathbb{R}^n$ for almost every $x$ in $\Omega$ and measurable in $x \in \Omega$ for every $\xi$ in $\mathbb{R}^n$). Given $\psi_1$, $\psi_2$ are two fixed functions in the Sobolev space $W^{1,p}(\Omega)$ such that $\psi_1 \le \psi_2$ almost everywhere in $\Omega$ and $\psi_1 \le 0 \le \psi_2$ on $\partial \Omega$. 
More precisely, we are interested in the double-obstacle problem for operator $\mathcal{L}$ consists of finding unknown function $u \in W^{1,p}_0(\Omega)$ satisfying $\psi_1 \le u \le \psi_2$ a.e. in $\Omega$ such that
\begin{align}\label{eq:Lu_ineq}
\mathcal{L}u \le - \mathrm{div} \mathcal{B}(x,\mathbf{F}) + g, 
\end{align}
where $\mathbf{F} \in L^p(\Omega;\mathbb{R}^n)$ and $g \in L^{\frac{p}{p-1}}(\Omega)$ for $1<p<\infty$. This problem naturally comes to the variational inequality
\begin{align}\tag{$\mathbf{P}$}\label{eq:DOP}
\int_{\Omega} \langle \mathcal{A}(x,\nabla u), \nabla (u-\varphi) \rangle dx \le  \int_{\Omega} \langle \mathcal{B}(x, \mathbf{F}), \nabla (u-\varphi) \rangle dx + \int_{\Omega} g(u-\varphi)  dx, 
\end{align}
for all $\varphi \in W^{1,p}_0(\Omega)$ and $\psi_1 \le \varphi \le \psi_2$ a.e. in $\Omega$. Such function $u$ in problem~\eqref{eq:DOP} is called a weak solution to the double-obstacle problem~\eqref{eq:Lu_ineq}. Here, we assume further that $\mathcal{A}(x,\cdot)$ is differentiable for almost every $x$ in $\Omega$,  and satisfies the growth conditions: there is $0<L<\infty$ such that \begin{align}\label{eq:A1-DOP}
& \hspace{2cm} \left| \mathcal{A}(x,\zeta) \right| + \left|\langle \partial_{\zeta} \mathcal{A}(x,\zeta), \zeta \rangle \right|  \le L |\zeta|^{p-1}, \\
\label{eq:A2-DOP}
& \langle \mathcal{A}(x,\zeta_1)-\mathcal{A}(x,\zeta_2), \zeta_1 - \zeta_2 \rangle \ge L^{-1} \left(|\zeta_1| + |\zeta_2| \right)^{p-2} |\zeta_1 - \zeta_2|^2,
\end{align}
for almost every $x$ in $\Omega$ and every $\zeta$, $\zeta_1$, $\zeta_2 \in \mathbb{R}^n \setminus \{0\}$.  As usual, we notice here that $\langle \cdot,\cdot \rangle$ is understood as the standard inner product in $\mathbb{R}^n$, and $\partial_\zeta$ denotes the partial derivative with respect to $\zeta$. Further, the operator $\mathcal{B}$ is also the Carath{\'e}dory vector valued mapping satisfying
\begin{align}\label{eq:B-DOP}
& \left| \mathcal{B}(x,\zeta) \right| \le L |\zeta|^{p-1}, \quad (x,\zeta) \in \Omega \times \mathbb{R}^n.
\end{align}

In the view of calculus of variations, a solution $u$ to this problem \eqref{eq:DOP} is also closely related to the minimizer of an energy functional satisfying $\psi_1 \le u \le \psi_2$. And the appearance of such double obstacle problems is indispensable for describing many physical phenomena, such as elasticity (to find the equilibrium position of an elastic membrane with additional constraints, see \cite{RO2017}), the Stefan's problem (to describe the temperature distribution in a homogeneous medium, see \cite{Duvaut1973, Lions1976}), financial mathematics (models for pricing American options, the \emph{exercise region} or price changes for market fluctuations, see \cite{LS2009}), Tug-of-War games (to obtain an approximation to $p$-Laplacian, see \cite{CLM2017}), etc. We also refer to \cite{Friedman,KS1980,Rodfrigues1987,Troianiello} for physical motivation and mathematical methods for obstacle problems and their applications.

The main features of this paper are the assumptions on boundary of domain $\Omega$ and the nonlinearity of coefficients $\mathcal{A}$. More specifically, in order to obtain the global regularity results, $\Omega$ here is assumed to be a \emph{Reifenberg flat dommain}. As far as we know, in the geometrical sense, the boundaries of Reifenberg flat domains are locally well-approximated by planes or hyperplanes at every scale. The concept of Reifenberg flat domain is a ``minimal regularity hypothesis" assumed on the boundary $\partial\Omega$ to guarantee the main results of the geometric analysis continue to be valid in $\Omega$. Global Calder\'on-Zygmund/regularity/gradient estimates for nonlinear elliptic and parabolic equations in such flat domains were first investigated by Byun and Wang in~\cite{BW2,SSB4} and later by others in an extensive list of references (see Section \ref{sec:pre} for detailed definition and description of Reifenberg flat domain). On the other-hand, instead of the assumption of locally small BMO semi-norm in $x$ for the nonlinearity $\mathcal{A}$, in this paper we confine such small BMO condition in $(n-1)$ spatial variables of vector $x \in \mathbb{R}^n$, meanwhile no assumption on the remaining one. Particularly, the coefficients $\mathcal{A}(x,\cdot)$ is allowed to be measurable in one single variable, say $x_1$, and only satisfies locally small BMO semi-norm in the remaining variables, we say $x^*=(x_2,x_3, ..., x_n) \in \mathbb{R}^{n-1}$ (note that these spatial variables may be rearranged which shows $x=(x_1,x^*)$ in vector space $\mathbb{R}^n$). It can be seen that this additional hypothesis on $\mathcal{A}$ (that is merely measurable in one spatial variable, but regular in the others), called \emph{partially weak BMO condition}, is weaker than the small BMO condition in the whole space $\mathbb{R}^n$ considered in previous studies \cite{SSB2, BW1_1, BW2, SSB4, MP12, PNnonuniform,MPTNsub} and so on. The study of Calder\'on-Zygmund theory for linear elliptic equations with partially BMO coefficients was introduced in \cite{DK2010} and \cite{BW3}, independently. Later, results can be extended to higher order elliptic and parabolic systems by Dong and D. Kim in \cite{DK2011} and for nonlinear elliptic equations of $p$-Laplacian type by Y. Kim in \cite{Kim2018}. It is worth noticing that Y. Kim in his paper showed that this condition is the minimal regularity requirement on $\mathcal{A}$ for Calder\'on-Zygmund type estimates. In general, to establish the Calder\'on-Zygmund estimates for nonlinear elliptic/parabolic equations, under \emph{partially weak BMO condition}, the number of spatial variables in which the nonlinearity $\mathcal{A}$ assumed to be measurable cannot be larger than one. The definition of partially BMO coefficients will be described in detail in Section \ref{sec:pre} below. 

1.2. \textit{Relation to prior works.}  
Before stating the main results in this article, let us briefly review some existing contributions related to regularity estimates developed in recent years. Associated with nonlinear elliptic equations, going back to the fundamental result due to Iwaniec in \cite{Iwaniec83}, the very first nonlinear Calder\'on-Zygmund type estimates related to the elliptic $p$-Laplace equation were presented. Then, classical results of Iwaniec were extended to the case of elliptic systems of $p$-Laplacian type by DiBenedetto and Manfredi. There have been further interior and global regularity results established by several authors with suitable form of data (divergence or non-divergence form, measure data) in some certain spaces, such as \cite{Mi3, BW1, 55QH4, CM2014, CoMi2016, MPTNsub, PNJDE, MPT2018}, et cetera. Equation for the constrained problem yields the variational inequality \eqref{eq:DOP} is quasilinear elliptic equation, in which data mixed between divergence and non-divergence forms
\begin{align*}
-\mathrm{div}\mathcal{A}(x,\nabla u) &= \ -\mathrm{div}\mathcal{B}(x,\mathbf{F})+g \quad \text{in} \ \ \Omega.
\end{align*}

The global Cader\'on-Zygmund estimate has recently proved by Lee and Ok in \cite{Lee2019} motivated by preceding works by V. B\"ogelein \textit{et al.} in~\cite{BDM2011} for the case of parabolic systems of $p$-Laplacian type. Also, one of our recent advances is the extension of results in \cite{Lee2019} to the framework of Lorentz spaces in \cite{PNmix}, which was also devoted to nonlinear problems with mixed data. Over the last years, a number of intensive studies have been developed through the works of many authors, that stitched together to form a panorama of regularity theory for nonlinear elliptic/parabolic equations. For one sided obstacle problems, regularity estimates have been extensively studied over the recent decades by many authors: Choe and Lewis in  \cite{CL1991} proved $C^{0,\alpha}$ and $C^{1,\alpha}$ regularity for elliptic problems, Eleuteri in \cite{Eleuteri2007, EHL2013} considered H\"older continuity for solutions of minimization problems under standard and non-standard growth, as far as Calder\'on-Zygmund estimates for elliptic/parabolic problems (see for instance \cite{Choe2016, BDM2011, BCW2012}) and a large number of works conducted, such as \cite{Scheven2, BS2012, Baroni2014} as well as many references therein.

Reaching far beyond the literature only deals with one-sided obstacle, we send the reader to some recent advances concerning the double-obstacle problems. Let us refer to one of the very first studies, \cite{MMV1989}, in which the authors studied pointwise regularity properties of solutions to \eqref{eq:DOP} in linear case when $p=2$ and right-hand side zero. Later, a great deal of progress has been made to extend to nonlinear operators, for instance, \cite{MZ1991, KZ1991, Lieberman1991} with degenerate elliptic operators, $C^{0,\alpha}$ and $C^{1,\alpha}$ in \cite{BFM2001, Choe1992}. Recently, Calder\'on-Zygmund and regularity results for a broader class of nonlinear elliptic double-obstacle problem in certain spaces presented in \cite{RT2011, BLO2020, BR2020} and our earlier works \cite{PN_dist, PN-Schro}.

1.3. \textit{Technical tools.} 
Let us summarize here some important techniques regarding Calder\'on-Zygmund type and regularity estimates for nonlinear elliptic and parabolic partial differential equations, which have been proposed and considered by many authors during the last years. In 1983, Iwaniec in his famous work \cite{Iwaniec83} first proved the local regularity results by the use of beautiful interplay between tools from Harmonic Analysis and Nonlinear PDEs. Later Caffarelli and Peral found a different approach to the $W^{1,p}$ estimates based on Hardy-Littlewood maximal operators together with a new and refined version of Calder\'on-Zygmund lemma, presented  in \cite{CP1998}. This effective method has been widely used and developed through a vast array of contributions since then. We pay particular attention to a very successful method by Acerbi and Mingione in \cite{AM2007}, that allows to achieve a Calder\'on-Zygmund estimates in which maximal operators and harmonic analysis play no role in their proofs. And later in recent decades, the idea of this technique becomes enormously popular and has subsequently been developed in a rich literature and references at the same topic, such as \cite{55QH4, 55QH2, MP12, BW2, MPT2018, PNCCM, PNJDE, Duzamin2,KM2012, KM2014,Mi3} an so on.  One can also find an extensive list of references in the recent survey paper \cite{Mi2019}.

Motivated by such effective approach, in our previous work \cite{PN_dist}, we study a new point of view and a new approach that is more interesting for pursuing regularity theory due to the so-called \emph{fractional maximal distribution functions} (FMD). Our approach is inspired on the one hand from the essence behind the proofs of Calder\'on-Zygmund-type estimates in \cite{AM2007, Mi3, Mi1} and on the other hand by the advantages of regularity estimates in terms of fractional maximal operators, proposed in preceding papers \cite{PNJDE, PNnonuniform, PNmix}. By introducing FMD and some interesting properties on its own, we also prove the applicability of such abstract results to gradient estimates of weak solutions for both quasilinear elliptic equations and (double) obstacle problems in the same paper. 

Continuing and extending the theoretical ideas in \cite{PN_dist}, our goal in this paper is to present a weighted approach in dealing with regularity issues for elliptic double obstacle problems. By deeply using some technical tools such as the boundedness property of fractional maximal functions, reverse H\"older's inequality and basic result referred to Vitali's covering lemma (a version of Calder\'on-Zygmund decomposition), we are able to prove the level-set inequalities by specifying via \emph{weighted fractional maximal distribution functions} (WFMDs). The understanding of technical ingredients will lead us to establish a more general form of weighted regularity estimates in Lorentz and generalized Orlicz spaces, respectively. Making good use of the WFMDs, we believe that our theoretical results in this paper can provide a more complete picture in regularity for nonlinear double obstacle problems, in which some appropriate applications (that appear in many different contexts) could be explored.

1.4. \textit{Main results.} Before stating the main results in the present paper, let us introduce some important terminologies and conventions. Under some suitable assumptions on the domain $\Omega$, the leading nonlinearity is in the class of BMO functions satisfying \emph{partially weak BMO condition} (see Section \ref{sec:pre} for detailed definition and explanation), we consider the weak solution $u \in W_0^{1,p}(\Omega)$ of the variational inequality (double obstacle problem) \eqref{eq:DOP} satisfying double constraints $\psi_1 \le u \le \psi_2$ with $\psi_1$, $\psi_2 \in W^{1,p}(\Omega)$. Here, we note that the given data $\mathbf{F} \in L^p(\Omega;\mathbb{R}^n)$ and $g \in L^{\frac{p}{p-1}}(\Omega)$ for $1<p<\infty$. For the sake of simplicity, in the sequel we will often denote
\begin{align}\label{def:F}
\mathbb{F} = \left(|\nabla \psi_1|^p + |\nabla \psi_2|^p + |\mathbf{F}|^{p} + |g|^{\frac{p}{p-1}}  \right)^{\frac{1}{p}}.
\end{align}
Assuming that the nonlinear operators $\mathcal{A}$, $\mathcal{B}$ satisfy conditions~\eqref{eq:A1-DOP}-\eqref{eq:A2-DOP} and~\eqref{eq:B-DOP}. The two-obstacle problem~\eqref{eq:DOP} will be investigated in the setting of weighted spaces associated to a Muckenhoupt weight $\omega \in \mathbf{A}_{\infty}$ with notation $[\omega]_{\mathbf{A}_{\infty}} = (\nu,c_0)$. Furthermore, for brevity, we shall denote
\begin{align*}
\texttt{data} \equiv  \texttt{data}(n,p,L,[\omega]_{\mathbf{A}_{\infty}},\mathrm{diam}(\Omega)/r_0),
\end{align*}
for the dependence on a set of parameters. It is worthwhile to note here that as our main theorems below will show, the universal constant $C$ may depend on $\texttt{data}$, though it is not specified explicitly in the statements. On the other hand, throughout this paper, for a suitable regularity parameter $\delta>0$ and positive constant $r>0$, we will simply write $(\mathbb{H})^{r,\delta}$ to say that $\Omega$ is $(r,\delta)$-Reifenberg flat domain and the operator $\mathcal{A}$ satisfies the weak $(r,\delta)$ BMO condition, $[\mathcal{A}]^{1,r} \le \delta$ at the same time (see Definitions \ref{def:Reifenberg} and \ref{def:BMOcond} in Section \ref{sec:pre} below).

We are now in the position to state our main results. Firstly in Theorem \ref{theo-A}, we highlight a novelty of level-set inequality regarding to the weighted fractional maximal distribution functions in this study. Based on the WFMD inequality in Theorem \ref{theo-A}, it enable us to conclude the global regularity results in the classical Lorentz spaces via Theorem \ref{theo-B} and in Orlicz-Lorentz spaces via Theorem \ref{theo:L-O}, respectively. Here, it is worth emphasizing that in our main results, global gradient estimates are preserved under fractional maximal operators $\mathbf{M}_\alpha$ (where the `fractional derivatives' $\partial u$ of weak solutions can be controlled by the norm of the data, see \cite{KM2014}). Once our main Theorem \ref{theo-B} is stated, an obvious corollary now follows (see Corollary \ref{cor-B}). In addition, this paper also contains the pointwise estimate of weak solutions to \eqref{eq:DOP} in terms of the classical Riesz potential $\mathbf{I}_\beta$, will be also indicated in Theorem \ref{theo-B} as following. 

\begin{theorem}[Level-set inequality on WFMDs]
\label{theo-A}
For every $\alpha \in [0,n)$ and $0 < a < \frac{2}{\nu}\left(1-\frac{\alpha}{n}\right)$, one can find $\varepsilon_0 = \varepsilon_0(\alpha,a)>0$, $\delta = \delta(\alpha,a,\varepsilon)>0$ and $\sigma = \sigma(\alpha,a,\varepsilon) > 0$ such that if $(\mathcal{A},\Omega)$ satisfying assumption $(\mathbb{H})^{r_0,\delta}$, then the following weighted distribution inequality
\begin{align}\label{est:theo-A}
& \mathbf{D}^{\omega}_{\mathbf{M}_{\alpha}(|\nabla u|^p)}(\varepsilon^{-{a}}\lambda) \le C \varepsilon \mathbf{D}^{\omega}_{\mathbf{M}_{\alpha}(|\nabla u|^p)}(\lambda) +  \mathbf{D}^{\omega}_{\mathbf{M}_{\alpha}(|\mathbb{F}|^p)}(\sigma\lambda),
\end{align}
holds for every $0 < \varepsilon < \varepsilon_0$ and $\lambda>0$. Here the weighted distribution function is defined by
\begin{align}\label{eq:def-dG}
\mathbf{D}^{\omega}_{f}(\lambda) := \int_{\left\{|f|> \lambda\right\}} \omega(x) dx, \quad \mbox{ for } \lambda \ge 0.
\end{align}
\end{theorem}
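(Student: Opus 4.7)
The plan is to establish this WFMD inequality via a good-$\lambda$ / Vitali covering decomposition adapted to the weighted setting, following the scheme used in \cite{PN_dist} for the unweighted case but upgraded through the $\mathbf{A}_\infty$ characteristic. First I would fix $\lambda>0$ and work on the super-level set $E_\lambda := \{\mathbf{M}_\alpha(|\nabla u|^p) > \lambda\} \cap \Omega$. Using a Calder\'on--Zygmund type stopping-time argument, I would produce a countable family of disjoint balls $\{B_{r_i}(x_i)\}$ with $x_i \in E_\lambda$, radii $r_i \le r_0$ comparable to the Reifenberg scale, such that the enlarged balls $5B_i$ cover $\{\mathbf{M}_\alpha(|\nabla u|^p) > \varepsilon^{-a}\lambda\}$ up to a null set, and on each $B_i$ the averaged bound involving $\alpha$ just fails. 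The construction forces a density relation $|B_i|^{1-\alpha/n} \lesssim \lambda^{-1}\int_{B_i}|\nabla u|^p$, which will be the engine for converting level-set measure into local energy.

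Next I would discard the \emph{good} balls on which $\mathbf{M}_\alpha(|\mathbb{F}|^p)(x_i) \le \sigma \lambda$; the complementary balls are absorbed into the second term $\mathbf{D}^{\omega}_{\mathbf{M}_\alpha(|\mathbb{F}|^p)}(\sigma\lambda)$ directly. For each good ball I would run the standard comparison chain for the double-obstacle problem: compare $u$ with the solution $w$ of the obstacle problem with the same constraints but with zero data ($\mathbf{F} \equiv 0$, $g \equiv 0$), then compare $w$ with the solution $v$ of the unconstrained homogeneous equation for a frozen nonlinearity $\bar{\mathcal{A}}$ depending only on $x_1$. The assumption $(\mathbb{H})^{r_0,\delta}$ --- Reifenberg flatness plus partially weak BMO --- controls the last comparison, while the obstacle constraints together with \eqref{eq:A1-DOP}--\eqref{eq:B-DOP} and the definition \eqref{def:F} of $\mathbb{F}$ control the first. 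From the regularity of $v$ (interior or boundary Lipschitz plus higher integrability) one obtains $\|\nabla v\|_{L^\infty(B_i)}^p \le C\lambda$ and hence the Lebesgue density estimate
\begin{equation*}
\bigl|\{\mathbf{M}_\alpha(|\nabla u|^p) > \varepsilon^{-a}\lambda\} \cap B_i\bigr| \le C\,\varepsilon^{\,2(1-\alpha/n)}\,|B_i|,
\end{equation*}
provided $\delta$ and $\sigma$ are chosen small enough depending on $\varepsilon$; the exponent $2(1-\alpha/n)$ arises from the $L^2$-type reverse H\"older / comparison estimate combined with the scaling of $\mathbf{M}_\alpha$.

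The weighted upgrade is where the condition $0 < a < \tfrac{2}{\nu}(1-\alpha/n)$ enters: since $\omega \in \mathbf{A}_\infty$ with $[\omega]_{\mathbf{A}_\infty} = (\nu,c_0)$, any measurable $E \subset B_i$ satisfies $\omega(E) \le c_0 (|E|/|B_i|)^{\nu}\,\omega(B_i)$. Applied to the density estimate above, this gives
\begin{equation*}
\omega\bigl(\{\mathbf{M}_\alpha(|\nabla u|^p) > \varepsilon^{-a}\lambda\} \cap B_i\bigr) \le C\,\varepsilon^{\,2\nu(1-\alpha/n)}\,\omega(B_i).
\end{equation*}
Since $a\nu < 2(1-\alpha/n)$, absorbing $\varepsilon^{-a\nu}$ into the right-hand side leaves a positive power of $\varepsilon$; choosing $\varepsilon_0$ small forces a factor of $\varepsilon$ (not merely $\varepsilon^{\text{small}}$) on the right. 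Summing over the Vitali family and using $\sum_i \omega(B_i) \le C\,\mathbf{D}^{\omega}_{\mathbf{M}_\alpha(|\nabla u|^p)}(\lambda)$ yields \eqref{est:theo-A}.

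The main obstacle I expect is twofold: first, carrying out the comparison chain uniformly for interior and boundary balls under the partially weak BMO hypothesis requires two separate boundary regularity results for $v$ (one on a half-space, with $\bar{\mathcal{A}}$ measurable in $x_1$), which is delicate because one cannot rely on full BMO smoothness; second, tracking the precise exponent so that the $\mathbf{A}_\infty$ conversion leaves room to produce $\varepsilon$ rather than $\varepsilon^0$ --- this is exactly what dictates the sharp range $a < \tfrac{2}{\nu}(1-\alpha/n)$ and explains why $\varepsilon_0$, $\delta$, $\sigma$ must be selected in the specific order stated.
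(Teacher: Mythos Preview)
Your overall architecture---a Vitali/covering decomposition, a local comparison chain, then the $\mathbf{A}_\infty$ upgrade of a Lebesgue density estimate---is the paper's strategy as well; the paper simply packages it through the weighted covering Lemma~\ref{lem:cover-lem}, whose two hypotheses are discharged by Lemmas~\ref{lem:LevSet-1} and~\ref{lem:LevSet-3}.

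The genuine gap is your appeal to an $L^\infty$ bound $\|\nabla v\|_{L^\infty(B_i)}^p \le C\lambda$ for the homogeneous reference solution. Under the \emph{partially} weak BMO hypothesis $(\mathbb{H})^{r_0,\delta}$ the frozen operator $\overline{\mathcal{A}}_{B^*}(x_1,\cdot)$ is still merely measurable in $x_1$, and Lipschitz regularity for solutions of $-\mathrm{div}\,\overline{\mathcal{A}}(x_1,\nabla V)=0$ is \emph{not} available---this is precisely the point of the partially BMO setting. What is available (via~\cite{Kim2018}, recorded here as Lemmas~\ref{lem:RH} and~\ref{lem:RH-b}) is the reverse H\"older inequality $\bigl(\fint_{B_\varrho}|\nabla V|^{\gamma p}\bigr)^{1/\gamma}\le C\fint_{B_{2\varrho}}|\nabla V|^p$ for each fixed $\gamma\ge 1$. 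The paper therefore controls the ``reference'' piece $\mathrm{III}=\bigl|\{\mathbf{M}_\alpha(\chi_{B_1}|\nabla V|^p)>3^{-p}\varepsilon^{-a}\lambda\}\bigr|$ by applying Lemma~\ref{lem:bound-M} with $s=\theta>1$ and then the reverse H\"older with $\gamma=\theta$, obtaining $\mathrm{III}\le C\varepsilon^{an\theta/(n-\alpha\theta)}\varrho^n$. After the $\mathbf{A}_\infty$ conversion this becomes $C\varepsilon^{\,a\nu\, n\theta/(n-\alpha\theta)}\omega(B_\varrho)$, and the specific choice $\theta=2/(a\nu+2\alpha/n)$ makes the exponent equal to $2$; this $\theta$ exceeds $1$ exactly when $a<\tfrac{2}{\nu}(1-\alpha/n)$. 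That is the true origin of the constraint on $a$.

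Your claimed fixed Lebesgue exponent $2(1-\alpha/n)$ and the ``absorbing $\varepsilon^{-a\nu}$'' step do not reflect this mechanism. The two comparison pieces (terms $\mathrm{I}$ and $\mathrm{II}$ in the paper) can be given any power of $\varepsilon$ by shrinking $\delta,\sigma,\varepsilon_1$, so they are never the bottleneck; the bottleneck is term $\mathrm{III}$, whose exponent is dictated by the reverse-H\"older parameter $\theta$, not by an $L^2$-type comparison. Indeed, if Lipschitz regularity were really available, term $\mathrm{III}$ would be empty for small $\varepsilon$ and no constraint on $a$ would arise at all---so your proposed explanation cannot account for the stated range of $a$. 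To repair the argument, replace the $L^\infty$ step by the reverse H\"older estimate with a $\theta$ chosen as above.
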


\begin{theorem}[Global Lorentz estimates and pointwise regularity]
\label{theo-B}
Assume that given data $\mathbf{M}_{\alpha}(|\mathbb{F}|^p) \in L^{q,s}_{\omega}(\Omega)$ for some $0<q<\infty$, $0 < s \le \infty$ and $\alpha \in [0,n)$. Then one can find $\delta_0 = \delta_0(\alpha,q,s)>0$ such that if $(\mathcal{A},\Omega)$ satisfies assumption $(\mathbb{H})^{r_0,\delta_0}$ then $\mathbf{M}_{\alpha}(|\nabla u|^p) \in L^{q,s}_{\omega}(\Omega)$ with the following inequality
\begin{align}\label{est:theo-B}
\|\mathbf{M}_{\alpha}(|\nabla u|^p)\|_{L^{q,s}_{\omega}(\Omega)} \le C\|\mathbf{M}_{\alpha}(|\mathbb{F}|^p)\|_{L^{q,s}_{\omega}(\Omega)}.
\end{align}
Moreover, for any $\beta \in (0,n)$ and $0<t<\infty$, the following point-wise estimate
\begin{align}\label{est:theo-B-2}
\mathbf{I}_{\beta}\left(\chi_{\Omega}|\mathbf{M}_{\alpha}(|\nabla u|^p)|^t\right)(x) \le C \mathbf{I}_{\beta}\left(\chi_{\Omega}|\mathbf{M}_{\alpha}(|\mathbb{F}|^p)|^t\right)(x),
\end{align}
holds for almost everywhere $x \in \mathbb{R}^n$.
\end{theorem}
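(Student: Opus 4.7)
The plan is to derive both parts of Theorem~\ref{theo-B} from the level-set inequality \eqref{est:theo-A} by layer-cake integration in the parameter $\lambda$, using in each case a suitable $\mathbf{A}_{\infty}$ weight: the given $\omega$ for the Lorentz bound, and a moving power weight for the pointwise Riesz bound.

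For \eqref{est:theo-B} I would assume $s<\infty$ (the case $s=\infty$ is analogous via the supremum representation) and start from the standard identity
\[
\|f\|_{L^{q,s}_\omega(\Omega)}^s = q\int_0^\infty \lambda^{s-1}\bigl[\mathbf{D}^\omega_f(\lambda)\bigr]^{s/q}\, d\lambda.
\]
The decisive choice is to fix $a \in \bigl(0,\min\bigl\{1/q,\,\tfrac{2}{\nu}(1-\tfrac{\alpha}{n})\bigr\}\bigr)$; this is the only place where $q$ enters the selection of parameters. Theorem~\ref{theo-A} then supplies $\varepsilon_0(\alpha,a)$, $\delta(\alpha,a,\varepsilon)$ and $\sigma(\alpha,a,\varepsilon)$. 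Raising \eqref{est:theo-A} to the power $s/q$, using $(A+B)^{s/q}\le C(A^{s/q}+B^{s/q})$, multiplying by $q\lambda^{s-1}$, integrating in $\lambda$, and performing the changes of variable $\lambda \mapsto \varepsilon^a \lambda$ on the left and $\lambda \mapsto \sigma^{-1}\lambda$ in the $\mathbb{F}$-term, produces
\[
\varepsilon^{as}\|\mathbf{M}_{\alpha}(|\nabla u|^p)\|_{L^{q,s}_\omega(\Omega)}^{s} \le C\varepsilon^{s/q}\|\mathbf{M}_{\alpha}(|\nabla u|^p)\|_{L^{q,s}_\omega(\Omega)}^{s} + C\sigma^{-s}\|\mathbf{M}_{\alpha}(|\mathbb{F}|^p)\|_{L^{q,s}_\omega(\Omega)}^{s}.
\]
Dividing by $\varepsilon^{as}$ turns the self-coefficient into $C\varepsilon^{s(1/q-a)}$, which tends to $0$ as $\varepsilon\downarrow 0$ because $a<1/q$. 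Choosing $\varepsilon$ so small that $C\varepsilon^{s(1/q-a)} \le 1/2$ then allows absorption and yields \eqref{est:theo-B} with $\delta_0 := \delta(\alpha,a,\varepsilon)$, which depends only on $(\alpha,q,s)$ as required.

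For the pointwise bound \eqref{est:theo-B-2}, the key observation is that for every fixed $x \in \mathbb{R}^n$ the power weight $\omega_x(y) := |x-y|^{\beta-n}$ belongs to $\mathbf{A}_1 \subset \mathbf{A}_\infty$ (since $\beta \in (0,n)$), and by translation invariance its $\mathbf{A}_\infty$ characteristic $(\nu_\beta, c_\beta)$ is independent of $x$. Fubini and the layer-cake formula give the identity
\[
\mathbf{I}_\beta\bigl(\chi_\Omega|h|^t\bigr)(x) = t\int_0^\infty \lambda^{t-1}\,\mathbf{D}^{\omega_x}_{h}(\lambda)\, d\lambda
\]
for any measurable $h$ vanishing outside $\Omega$. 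Choosing $a \in \bigl(0,\min\bigl\{1/t,\,\tfrac{2}{\nu_\beta}(1-\tfrac{\alpha}{n})\bigr\}\bigr)$ and applying \eqref{est:theo-A} pointwise in $x$ with weight $\omega_x$, the same integration against $t\lambda^{t-1}$ followed by the same two substitutions delivers
\[
\varepsilon^{at}\mathbf{I}_\beta\bigl(\chi_\Omega|\mathbf{M}_\alpha(|\nabla u|^p)|^t\bigr)(x) \le C\varepsilon\,\mathbf{I}_\beta\bigl(\chi_\Omega|\mathbf{M}_\alpha(|\nabla u|^p)|^t\bigr)(x) + C\sigma^{-t}\mathbf{I}_\beta\bigl(\chi_\Omega|\mathbf{M}_\alpha(|\mathbb{F}|^p)|^t\bigr)(x).
\]
Since $at<1$, the self-coefficient $C\varepsilon^{1-at}$ can be made at most $1/2$ by shrinking $\varepsilon$, and absorption produces \eqref{est:theo-B-2}.

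The main technical obstacle in both steps is justifying the absorption, since one may not subtract a possibly infinite quantity from both sides. I would handle this by a truncation $\mathbf{M}_\alpha(|\nabla u|^p)\wedge N$, which has finite Lorentz norm and finite Riesz potential because it is bounded and supported in the bounded set $\Omega$; proving the inequality with constants independent of $N$ and passing $N\to\infty$ by monotone convergence then removes the truncation. The rest is bookkeeping: checking that the chain $(a,\varepsilon,\sigma,\delta)$ can be tracked so that $\delta_0$ depends only on the stated parameters, and verifying the uniformity in $x$ of the $\mathbf{A}_\infty$ characteristic of $\omega_x$.
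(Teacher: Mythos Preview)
Your argument for \eqref{est:theo-B} is identical to the paper's: the same choice $a<\min\{1/q,\tfrac{2}{\nu}(1-\alpha/n)\}$, the same layer-cake integration of \eqref{est:theo-A}, and the same absorption after the change of variable. For \eqref{est:theo-B-2} the paper takes a slightly less direct route: it applies the already-established \eqref{est:theo-B} with $q=s=t$ and an \emph{arbitrary} $\omega\in\mathbf{A}_1$, then cites an external result (\cite[Lemma~4.2]{MPTNsub}) asserting that $\int\varphi\,d\omega\le C\int\psi\,d\omega$ for every $\omega\in\mathbf{A}_1$ implies $\mathbf{I}_\beta\varphi\le C\,\mathbf{I}_\beta\psi$ a.e. Your approach is really the same mechanism with that black box unpacked: you choose the specific $\mathbf{A}_1$ weight $\omega_x(y)=|x-y|^{\beta-n}$, observe that its $\mathbf{A}_\infty$ characteristic is independent of $x$, and feed it straight into Theorem~\ref{theo-A} via layer cake. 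This is more self-contained and makes explicit the uniformity in $x$ that the paper's route requires anyway (since the constant and $\delta_0$ in \eqref{est:theo-B} depend on $[\omega]_{\mathbf{A}_\infty}$). Your truncation remark to justify absorption is also a point the paper does not address.
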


We then apply Theorem~\ref{theo-B} to the associated $\alpha = 0$ and use the boundedness property of $\mathbf{M}$ to infer the following corollary. This may be more familiar with most of the readers in the same topic.

\begin{corollary}\label{cor-B}
If given data $\mathbb{F}$ defined as in~\eqref{def:F} belongs to the weighted Lorentz space $L^{q,s}_{\omega}(\Omega)$ for some $0<q<\infty$ and $0 < s \le \infty$ then one can find $\delta_0 = \delta_0(q,s)>0$ such that if $(\mathcal{A},\Omega)$ satisfies assumption $(\mathbb{H})^{r_0,\delta_0}$ then $\nabla u \in L^{q,s}_{\omega}(\Omega)$. More precisely, there holds
\begin{align}\label{est:cor-B}
\|\nabla u\|_{L^{q,s}_{\omega}(\Omega)} \le C\|\mathbb{F}\|_{L^{q,s}_{\omega}(\Omega)}.
\end{align}
In addition, for every $\beta \in (0,n)$ the following point-wise estimate
\begin{align}\label{eq:cor-B}
\mathbf{I}_{\beta}(\chi_{\Omega}|\nabla u|^q)(x) \le C \mathbf{I}_{\beta}(\chi_{\Omega}|\mathbb{F}|^q)(x), 
\end{align}
holds for almost everywhere $x \in \mathbb{R}^n$.
\end{corollary}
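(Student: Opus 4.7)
The plan is to specialize Theorem~\ref{theo-B} to $\alpha = 0$, in which case $\mathbf{M}_0$ is the standard Hardy--Littlewood maximal operator $\mathbf{M}$, and to combine its two assertions with two elementary facts: the Lebesgue-differentiation bound $|h| \le \mathbf{M}(|h|)$ almost everywhere, and the boundedness of $\mathbf{M}$ on weighted Lorentz spaces for suitable Muckenhoupt weights. A change of Lorentz index $(q,s) \leadsto (q/p, s/p)$, together with the homogeneity $\||h|^p\|_{L^{q/p, s/p}_\omega} = \|h\|_{L^{q,s}_\omega}^p$ (immediate from $(|h|^p)^*_\omega = (h^*_\omega)^p$), will absorb the $p$-th powers appearing throughout.

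For the norm inequality~\eqref{est:cor-B}, the chain reads
\begin{align*}
\|\nabla u\|_{L^{q,s}_\omega(\Omega)}^{p}
&= \||\nabla u|^p\|_{L^{q/p, s/p}_\omega(\Omega)}
\le \|\mathbf{M}(|\nabla u|^p)\|_{L^{q/p, s/p}_\omega(\Omega)} \\
&\le C\,\|\mathbf{M}(|\mathbb{F}|^p)\|_{L^{q/p, s/p}_\omega(\Omega)}
\le C\,\|\mathbb{F}\|_{L^{q,s}_\omega(\Omega)}^p,
\end{align*}
where the first inequality uses $|\nabla u|^p \le \mathbf{M}(|\nabla u|^p)$ pointwise, the second is Theorem~\ref{theo-B} applied with $\alpha = 0$ and exponents $(q/p, s/p)$ (choosing $\delta_0$ accordingly), and the third combines the homogeneity with the $L^{q/p,s/p}_\omega$-boundedness of $\mathbf{M}$ for $\omega$ in the appropriate Muckenhoupt class. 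Extracting $p$-th roots yields~\eqref{est:cor-B}.

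For the pointwise Riesz inequality~\eqref{eq:cor-B}, I invoke the pointwise assertion~\eqref{est:theo-B-2} of Theorem~\ref{theo-B} with $\alpha = 0$ and $t = q/p$. Using $|\nabla u|^q \le \mathbf{M}(|\nabla u|^p)^{q/p}$ almost everywhere on the left and then~\eqref{est:theo-B-2} produces
\begin{align*}
\mathbf{I}_\beta(\chi_\Omega|\nabla u|^q)(x) \le C\,\mathbf{I}_\beta\bigl(\chi_\Omega\,\mathbf{M}(|\mathbb{F}|^p)^{q/p}\bigr)(x).
\end{align*}
It then remains to dominate the right-hand side by $C\,\mathbf{I}_\beta(\chi_\Omega|\mathbb{F}|^q)(x)$, which is precisely the $L^{q/p}$-boundedness of $\mathbf{M}$ against the frozen Riesz kernel $y\mapsto|x-y|^{\beta-n}$ viewed as a weight on $\Omega$.

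The main obstacle is this last reduction: passing from $\mathbf{I}_\beta(\mathbf{M}(|\mathbb{F}|^p)^{q/p})$ to $\mathbf{I}_\beta(|\mathbb{F}|^q)$ pointwise genuinely goes beyond Theorem~\ref{theo-B}. The required ingredient is a Muckenhoupt $\mathbf{A}_{q/p}$-verification for $|x-\cdot|^{\beta-n}$ on $\Omega$, uniform in $x$, which restricts the admissible exponent range; an analogous compatibility between $\omega$ and $q/p$ is implicit in the Lorentz bound used above, and is where the dependence of $\delta_0$ on $q,s$ ultimately enters.
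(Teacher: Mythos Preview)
Your approach is correct and matches the paper's one-line justification (``apply Theorem~\ref{theo-B} with $\alpha=0$ and use the boundedness of $\mathbf{M}$''). For \eqref{est:cor-B} you carry out exactly that program: shift the Lorentz exponents to $(q/p,s/p)$, invoke Theorem~\ref{theo-B}, sandwich with $|h|^p\le\mathbf{M}(|h|^p)$ on one side and the weighted Lorentz boundedness of $\mathbf{M}$ on the other. You are right that the last step tacitly requires $q>p$ together with $\omega$ in the appropriate $\mathbf{A}_{q/p}$ class; the paper is equally implicit about this.

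For \eqref{eq:cor-B} your route is valid but slightly more laborious than what the paper's proof of Theorem~\ref{theo-B} suggests. Rather than invoking \eqref{est:theo-B-2} and then separately justifying $\mathbf{I}_\beta\bigl(\chi_\Omega\mathbf{M}(|\mathbb{F}|^p)^{q/p}\bigr)\le C\,\mathbf{I}_\beta(\chi_\Omega|\mathbb{F}|^q)$ via the $\mathbf{A}_{q/p}$ membership of $y\mapsto|x-y|^{\beta-n}$, one can simply repeat the mechanism used to derive \eqref{est:theo-B-2} from \eqref{est:theo-B}: once \eqref{est:cor-B} is known to hold with $q=s$ for every $\omega\in\mathbf{A}_1$ (and it does, since $\mathbf{A}_1\subset\mathbf{A}_{q/p}$ when $q>p$), the lemma from \cite[Lemma~4.2]{MPTNsub} applied with $\varphi=\chi_\Omega|\nabla u|^q$ and $\psi=\chi_\Omega|\mathbb{F}|^q$ yields \eqref{eq:cor-B} directly. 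Your explicit verification that $|x-\cdot|^{\beta-n}\in\mathbf{A}_1$ (uniformly in $x$, by translation invariance) is precisely the content of that lemma, so the two arguments are the same in substance; the paper's packaging just avoids re-deriving the kernel estimate.
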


\begin{theorem}[Global Orlicz-Lorentz estimates]\label{theo:L-O}
Let $\Phi$ be a Young function such that $\Phi \in \Delta_2$. Assume that $\mathbf{M}_{\alpha}(|\mathbb{F}|^p)$ belongs to the weighted Orlicz-Lorentz space $L^{\Phi;q,s}_{\omega}(\Omega)$ for some $0 < q < \infty$, $0< s \le \infty$ and $\alpha \in [0,n)$. Then one can find $\delta_0 = \delta_0(\alpha,q,s)>0$ such that if $(\mathcal{A},\Omega)$ satisfies assumption $(\mathbb{H})^{r_0,\delta_0}$ then $\mathbf{M}_{\alpha} (|\nabla u|^p) \in L^{\Phi;q,s}_{\omega}(\Omega)$ according to the inequality
\begin{align}\label{eq:L-O}
\|\mathbf{M}_{\alpha}(|\nabla u|^p)\|_{L^{\Phi;q,s}_{\omega}(\Omega)} \le C \|\mathbf{M}_{\alpha}(|\mathbb{F}|^p)\|_{L^{\Phi;q,s}_{\omega}(\Omega)}.
\end{align}
\end{theorem}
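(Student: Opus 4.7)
The plan is to derive the Orlicz-Lorentz norm estimate from the weighted distribution inequality of Theorem~\ref{theo-A} by integration against the Young function $\Phi$. First I would represent the weighted Orlicz-Lorentz quasi-norm of a measurable function $f$ by an integral of the form
\begin{align*}
\|f\|_{L^{\Phi;q,s}_{\omega}(\Omega)}^{s} \; \sim \; \int_0^{\infty} \bigl[\Phi(\lambda)\bigr]^{s}\bigl[\mathbf{D}^{\omega}_{f}(\lambda)\bigr]^{s/q}\,\frac{d\lambda}{\lambda},
\end{align*}
(or the obvious $\sup$-version when $s=\infty$). Applying this with $f=\mathbf{M}_{\alpha}(|\nabla u|^{p})$ and changing variables $\lambda\mapsto \varepsilon^{-a}\lambda$ inside the integral gives an expression in which $\mathbf{D}^{\omega}_{\mathbf{M}_{\alpha}(|\nabla u|^{p})}(\varepsilon^{-a}\lambda)$ appears, and which is ready to receive the input from Theorem~\ref{theo-A}.

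Next I would feed the level-set inequality \eqref{est:theo-A} into this integral. Using the elementary bound $(A+B)^{s/q}\le C_{s,q}(A^{s/q}+B^{s/q})$ (when $s\le q$, and the analogous treatment via the $s/q$-triangle inequality for $s>q$), this yields
\begin{align*}
\int_0^\infty \bigl[\Phi(\varepsilon^{-a}\lambda)\bigr]^s \bigl[\mathbf{D}^{\omega}_{\mathbf{M}_\alpha(|\nabla u|^p)}(\varepsilon^{-a}\lambda)\bigr]^{s/q}\tfrac{d\lambda}{\lambda} \le C\,\varepsilon^{s/q}\!\!\int_0^\infty\!\bigl[\Phi(\varepsilon^{-a}\lambda)\bigr]^s \bigl[\mathbf{D}^{\omega}_{\mathbf{M}_\alpha(|\nabla u|^p)}(\lambda)\bigr]^{s/q}\tfrac{d\lambda}{\lambda} + \text{(data term)}.
\end{align*}
The $\Delta_2$ assumption on $\Phi$ is now invoked to convert $\Phi(\varepsilon^{-a}\lambda)$ into $K(\varepsilon)\,\Phi(\lambda)$ with an explicit constant $K(\varepsilon)$ depending only on the $\Delta_2$-constant of $\Phi$ and on $\varepsilon^{-a}$. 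A parallel change of variable $\lambda\mapsto \sigma^{-1}\lambda$ and another $\Delta_2$-application handles the data term, producing a multiple of $\|\mathbf{M}_{\alpha}(|\mathbb{F}|^p)\|_{L^{\Phi;q,s}_{\omega}(\Omega)}^{s}$ on the right.

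The main obstacle will be the competition between the smallness factor $C\varepsilon^{s/q}$ from the level-set inequality and the possibly large factor $K(\varepsilon)^s$ coming from the $\Delta_2$ condition. Since $K(\varepsilon)$ depends only on $\Phi$ and on the fixed exponent $a$, while $\varepsilon$ may be taken arbitrarily small, I can choose $\varepsilon\in(0,\varepsilon_0)$ so that $C\varepsilon^{s/q}K(\varepsilon)^s\le 1/2$, thereby absorbing the self-coupling term into the left-hand side. This absorption step of course requires knowing a priori that the left-hand integral is finite; I would justify this by a standard truncation-approximation argument, replacing $f$ by $\min\{f,M\}$ for large $M$ (which keeps its Orlicz-Lorentz norm finite), running the argument, and passing to the limit via monotone convergence using $\Phi\in\Delta_2$ to avoid blow-up of the $K(\varepsilon)$ constant.

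Finally, rewriting the resulting inequality back in norm form and combining the absorbed self-coupling with the remaining data term yields \eqref{eq:L-O} with a constant $C$ depending on $\texttt{data}$, $q$, $s$ and the $\Delta_2$-constant of $\Phi$; the smallness parameter $\delta_0$ is dictated by the choice of $\varepsilon$ through Theorem~\ref{theo-A}, giving $\delta_0=\delta_0(\alpha,q,s,\Phi)$ as claimed. The $s=\infty$ case follows by the standard supremum version of the same manipulation, replacing integration in $\lambda$ by $\sup_\lambda$.
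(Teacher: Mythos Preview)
Your overall strategy---feed the level-set inequality of Theorem~\ref{theo-A} into an integral representation of the Orlicz-Lorentz quasi-norm and absorb the self-coupling term---matches the paper's. However, the absorption step as you have written it contains a genuine gap.

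The $\Delta_2$ condition gives $\Phi(\varepsilon^{-a}\lambda)\le C\varepsilon^{-ap_1}\Phi(\lambda)$ for some exponent $p_1>1$ depending only on the $\Delta_2$ constant of $\Phi$; in particular $K(\varepsilon)\sim \varepsilon^{-ap_1}$ \emph{blows up} as $\varepsilon\to 0$. Consequently the product you must make small is of order $\varepsilon^{s/q}K(\varepsilon)^s\sim \varepsilon^{s(1/q-ap_1)}$, and this goes to zero \emph{only if} $a<\tfrac{1}{p_1q}$. Your sentence ``Since $K(\varepsilon)$ depends only on $\Phi$ and on the fixed exponent $a$, while $\varepsilon$ may be taken arbitrarily small\ldots'' treats $K(\varepsilon)$ as harmless, but without the additional constraint $a<\tfrac{1}{p_1q}$ the absorption fails (for instance, if $a$ is taken close to the upper bound $\tfrac{2}{\nu}(1-\alpha/n)$ allowed by Theorem~\ref{theo-A}). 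The paper fixes this at the outset by choosing
\[
0<a<\min\Bigl\{\tfrac{2}{\nu}\bigl(1-\tfrac{\alpha}{n}\bigr),\ \tfrac{1}{p_1q}\Bigr\},
\]
which makes the self-coupling coefficient $C\varepsilon^{1/q-ap_1}\to 0$ and the argument goes through.

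Two further points. First, your integral representation of the quasi-norm is not quite the one the paper uses: the paper works directly with $\|\Phi(\mathbf{M}_\alpha(|\nabla u|^p))\|_{L^{q,s}_\omega}$, substitutes $\lambda\mapsto \varepsilon^a\Phi^{-1}(\lambda)$ in the distribution inequality, and obtains an inequality at the level of $\mathbf{D}^\omega_{\Phi(\mathbf{M}_\alpha(\cdot))}$; your version with weight $[\Phi(\lambda)]^s\,d\lambda/\lambda$ is only equivalent under an extra $\nabla_2$-type condition on $\Phi$. Second, the estimate you end with is for $\|\Phi(\cdot)\|_{L^{q,s}_\omega}$, not for the Luxemburg norm $\|\cdot\|_{L^{\Phi;q,s}_\omega}$; the paper closes this gap by a scaling argument (replacing $|\nabla u|^p,|\mathbb{F}|^p$ by $\lambda^{-1}|\nabla u|^p,\lambda^{-1}|\mathbb{F}|^p$) together with the convexity of $\Phi$, which you should also include.
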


1.5. \textit{Outline of the paper.} The remainder of this article will be organized as follows. In Section~\ref{sec:pre} we introduce much of general notation, basic definitions and a few preliminary results that will be needed throughout the paper. The next section~\ref{sec:ingre} focuses on some crucial ingredients of regularity theory that will be discussed in the context of our approach. Section~\ref{sec:comparison} is devoted to proving some comparison results for double obstacle problems. For most of the research in regularity, the main difficulty is to establish comparison estimates (actually the difference between gradients of our solutions and solutions of standard homogeneous equations). An important observation is that the step of proving such comparison results is one of the key ingredients of our work. Then, in Section~\ref{sec:level-set}, we state and prove some preparatory results for the proofs of main results in Section~\ref{sec:proofs} by establishing level-set inequalities that concerning the WFMDs. 

\section{Preliminaries}
\label{sec:pre}
This preparatory section is devoted to providing some notations, conventions and basic definitions that will be essential for our main proofs later on. Moreover, we also introduce basic assumptions on problem, state and prove some preliminary results in this section.

2.1. \textbf{Notation and conventions.} In the sequel, the letter $C$ will be employed to represent a generic constant, whose value is larger or equal than one, may change from line to line during chains of estimates. The dependencies of $C$ on special parameters will be suitably emphasized between parentheses. In what follows, according to the standard notation, the Lebesgue measure of a measurable set $K \subset \mathbb{R}^n$ is denoted by $|K|$ and we will use the denotation $\displaystyle{\fint_{K}{h dx}}=\displaystyle{\frac{1}{|K|}\int_{K}{h dx}}$ as the integral average of a measurable map $h \in L^1_{\mathrm{loc}}(K)$. In the paper, $\Omega$ will denote an open bounded domain in $\mathbb{R}^n$, $n \ge 2$ and an arbitrary open ball in $\mathbb{R}^n$ of center $\xi$ and radius $\rho>0$ is the set $\{z \in \mathbb{R}^n: |z-\xi|<\rho\}$, is simply abbreviated as $B_\rho(\xi)$. Further, we also set $\Omega_{\rho}(\xi) := B_{\rho}(\xi) \cap \Omega$, and when the center $\xi \in \partial\Omega$, it can be seen as the ``surface ball'' in $\mathbb{R}^n$. Throughout the paper, by an abuse of notation whenever confusion does not arise, the set $\{x \in \Omega: |h(x)|>\tau\}$ is also written as $\{|h|>\tau\}$ for short.\\[3pt]

2.2. \textbf{Assumptions on domain and coefficients.}
\begin{definition}[$(r_0,\delta)$-Reifenberg]\label{def:Reifenberg}
For $0 < \delta < 1$ and $r_0>0$, $\Omega$ is called a $(r_0,\delta)$-Reifenberg flat domain or $\Omega$ is $(r_0,\delta)$-Reifenberg for brevity if for each $\xi \in \partial \Omega$ and each $\varrho \in (0,r_0]$, it is possible to find a coordinate system $\{y_1,y_2,...,y_n\}$ with origin at $\xi$ such that
\begin{align*}
B_{\varrho}(\xi) \cap \{y_n > \delta \varrho\} \subset B_{\varrho}(\xi) \cap \Omega \subset B_{\varrho}(\xi) \cap \{y_n > -\delta \varrho\}.
\end{align*}
\end{definition}

\begin{definition}[Partially weak $(r_0,\delta)-\mathrm{BMO}$ condition]\label{def:BMOcond}
The operator $\mathcal{A}$ is called that satisfying a partially weak $(r_0,\delta)-\mathrm{BMO}$ condition with respect to $\delta >0$ and $r_0>0$ if
\begin{align}\label{cond:BMO}
[\mathcal{A}]^{1,r_0} := \sup_{y \in \mathbb{R}^n, \, \varrho \in (0, r_0]} \fint_{B_{\varrho}(y)} \theta_1\left(\mathcal{A},B_{\varrho}(y)\right)(x) dx  \le \delta.
\end{align}
Here the function $\theta_1$ defined by
\begin{align}\label{def:theta}
\theta_1\left(\mathcal{A},B_{\varrho}(y)\right)(x) = \sup_{\xi \in \mathbb{R}^n \setminus \{0\}} \frac{|\mathcal{A}(x,\xi) - \overline{\mathcal{A}}_{B^*_{\varrho}(y^*)}(x_1,\mu)|}{|\xi|^{p-1}},
\end{align}
where $x = (x_1, x^*) \in \mathbb{R}^n$ with $x^* = (x_2, x_3,..., x_n)$, and $\overline{\mathcal{A}}_{B^*_{\varrho}(y^*)}$ denotes the integral average of $\mathcal{A}$ in $B^*_{\varrho}(y^*)$, i.e.
\begin{align*}
\overline{\mathcal{A}}_{B^*_{\varrho}(y^*)} = \fint_{B^*_{\varrho}(y^*)} \mathcal{A}(x_1,x^*,\mu) dx^*.
\end{align*}
\end{definition}
\begin{remark}
\label{rem:BMO_cond}
As aforementioned in the introductory section, this type of condition is weaker than the small $(r,\delta)$-BMO condition on operator $\mathcal{A}$ (assumed on the whole space $\mathbb{R}^n$) and therefore, leading to this new assumption, results will cover a larger class of problems with coefficient operators $\mathcal{A}$ considered in \cite{BW2, BW1_1}. It means that there is no regularity requirement in one variable $x_i$, $1 \le i \le n$, with a little abuse of notation, we say $x_1$. It can be highly oscillatory (or be a big jump moving) along the $x_1$-direction and the small BMO semi-norm only assumed in $x^*=(x_2, x_3,..., x_n)$. In \cite{BR2020}, authors used a different terminology of this condition, named  $(r,\delta)$-vanishing of co-dimension one. We recommend the readers to \cite{BW3, DK2010, DK2011, Kim2018} for detailed explanations of such requirement. This kind of assumption has its own significance, for instance, to discuss mathematical representations of models of elastic laminates or composite materials, see \cite{CKV1986, LN2003} and references relating directly the topic.
\end{remark}
\begin{definition}[Muckenhoupt classes]\label{def:Muck}
A non-negative measurable function $\omega \in L^p_{\mathrm{loc}}(\mathbb{R}^n)$ is called belonging to $\mathbf{A}_p$ with $ p \in [1, \infty)$,  if $[\omega]_{\mathbf{A}_p} < \infty$, where
\begin{align*}
[\omega]_{\mathbf{A}_p} := \sup_{B_{\varrho}(\xi) \subset \mathbb{R}^n} \left(\fint_{B_{\varrho}(\xi)} \omega(z) dz\right)\left(\fint_{B_{\varrho}(\xi)}\omega(z)^{-\frac{1}{p-1}}dz\right)^{p-1},
\end{align*}
if $ p \in (1, \infty)$ and
\begin{align*}
[\omega]_{\mathbf{A}_1} := \sup_{B_{\varrho}(\xi) \subset \mathbb{R}^n} \left(\fint_{B_{\varrho}(\xi)} \omega(z) dz\right) \sup_{z \in B_{\varrho}(\xi)} [\omega(z)]^{-1}.
\end{align*}
In particular, when $p = \infty$ we say that $\omega \in \mathbf{A}_{\infty}$ if there exist constants $c_0,\nu>0$ satisfying
\begin{align*}
\omega(K) \le c_0 \left(\frac{|K|}{|B|}\right)^\nu \omega(B), 
\end{align*}
for any measurable subset $K$ of arbitrary ball $B$ in $\mathbb{R}^n$, where $\omega(K):=\int_{K}\omega(z)dz$. In this case, we write $[\omega]_{\mathbf{A}_\infty} = (c_0,\nu)$. 
\end{definition}

Such $\omega$ satisfies Definition \ref{def:Muck} is called a Muckenhoupt weight. We also remark here two  standard properties of the Muckenhoupt classes: $\mathbf{A}_1 \subset \mathbf{A}_p \subset \mathbf{A}_\infty$ for all $1 < p < \infty$ and $\mathbf{A}_\infty = \displaystyle{\bigcup_{p<\infty}\mathbf{A}_p}.$

2.3. \textbf{Other definitions and Remarks.} In this section, we also give some further definitions concerning the main results of this paper.
\begin{definition}[Weighted Lorentz spaces]
Let $0<q<\infty$, $0<s\le \infty$ and a Muckenhoupt weight $\omega \in \mathbf{A}_{\infty}$. The weighted Lorentz space $L^{q,s}_{\omega}(\Omega)$ is the set which contains all of $f \in L^1_{\mathrm{loc}}(\Omega)$ satisfying $\|f\|_{L^{q,s}_{\omega}(\Omega)}$ is finite, where
\begin{align*}
\|f\|_{L^{q,s}_{\omega}(\Omega)} := \left[ q \int_0^\infty{ \lambda^{s-1}\omega ( \{\xi \in \Omega: |f(\xi)|>\lambda\} )^{\frac{s}{q}} d\lambda} \right]^{\frac{1}{s}} , 
\end{align*}
if $s < \infty$ and
\begin{align*}
\|f\|_{L^{q,\infty}_{\omega}(\Omega)} :=  \sup_{\lambda>0}{\lambda \omega(\{\xi \in \Omega:|f(\xi)|>\lambda\})^{\frac{1}{q}}}.
\end{align*}
\end{definition}
It can be seen that when $\omega \equiv 1$, the weighted Lorentz space $L^{q,s}_{\omega}(\Omega)$ becomes the Lorentz space $L^{q,s}(\Omega)$. Moreover, in a special case, the weighted Lorentz space $L^{q,q}_{\omega}(\Omega)$ coincides to the well-known weighted Lebesgue space $L^{q}_{\omega}(\Omega)$ which contains all of measurable function $f$ satisfying
$$\|f\|_{L^{q}_{\omega}(\Omega)} := \left(\int_{\Omega} |f(z)|^q \omega(z) dz\right)^{\frac{1}{q}} < \infty.$$

\begin{definition}[Weighted Orlicz-Lorentz spaces]
Let $q \in (0,\infty)$, $0< s \le \infty$ and $\Phi \in \Delta_2$ be a Young function. A measurable functions $f$ is called belonging to the weighted Orlicz-Lorentz class $\mathcal{O}^{\Phi; q,s}_{\omega}(\Omega)$ when $\|\Phi(|f|)\|_{L^{q,s}_{\omega}(\Omega)} < \infty$. 

The weighted Orlicz-Lorentz space $L^{\Phi;q,s}_{\omega}(\Omega)$ is known as the smallest linear subspace that contains $\mathcal{O}^{\Phi;q,s}_{\omega}(\Omega)$, equipped to the Luxemburg norm
 \begin{align*}
 \|f\|_{L^{\Phi;q,s}_{\omega}(\Omega)} = \inf \left\{t: \ t >0 \ \mbox{ satisfying } \ \left\|\Phi\left({t}^{-1}{|f|}\right)\right\|_{L^{q,s}_{\omega}(\Omega)} \le 1\right\}.
 \end{align*}
\end{definition}

\begin{definition}[Maximal operators]\label{def:Malpha}
Let $0 \le \alpha \le n$, we denote by $\mathbf{M}_{\alpha}$ the fractional maximal operator of $f \in L^1_{\mathrm{loc}}(\mathbb{R}^n)$, which is given by
\begin{align} \nonumber
\mathbf{M}_\alpha f(y) = \sup_{\varrho>0}{{\varrho}^{\alpha} \fint_{B_{\varrho}(y)}{|f(z)|dz}}, \quad y \in \mathbb{R}^n.
\end{align}
Remark that $\mathbf{M}_0$ is exactly the Hardy-Littlewood operator $\mathbf{M}$ which is studied in many literature.
\end{definition}

\begin{definition}[Riesz potential]
Let $\beta \in (0,n)$ and $f \in L^1_{\mathrm{loc}}(\mathbb{R}^n;\mathbb{R}^+)$, the fractional integral operator (or Riesz potential) of $f$, denoted by $\mathbf{I}_\beta$, is given as
\begin{align}\label{def:Riesz}
\mathbf{I}_{\beta}f(x) = \int_{\mathbb{R}^n}{\frac{f(\xi)d\xi}{|x-\xi|^{n-\beta}}}, \quad x \in \mathbb{R}^n.
\end{align}
\end{definition}

\section{Main ingredients for regularity estimates}
\label{sec:ingre}
In this section, we discuss the main ingredients in our strategy to prove regularity results for double obstacle problems \eqref{eq:DOP}. Proofs are based on the following three ingredients: some level-set inequalities on weighted fractional maximal distributions (WFDMs), a type of Vitali's covering lemma, the construction of reference homogeneous problem - to a reverse H\"older's inequality. We shall describe each of these key ingredients of this approach briefly below.

3.1. \textit{Level-set inequalities on WFDMs.} One of the main ingredient used in this paper is the level-set inequality performed on the so-called (weighted) FDMs. More precisely, in this paper, we depart from the approach discussed in \cite{PN_dist} and explore more on a weighted version. 
\begin{definition}
Let $\omega \in \mathbf{A}_{\infty}$ and a given ball $B \subset \mathbb{R}^n$. For $\lambda \ge 0$ the weighted distribution function of a measurable mapping $f$ associated to $\omega$ in $B$ is defined by
\begin{align}\label{eq:def-dG}
\mathbf{D}^{\omega}_{f}(\lambda; B) := \omega\left(\left\{x \in \Omega \cap B: \ |f(x)|> \lambda\right\}\right). 
\end{align}
In particular, we will write $\mathbf{D}^{\omega}_{f}(\lambda)$ instead of $\mathbf{D}^{\omega}_{f}(\lambda; B)$ when the open ball $B$ contains $\Omega$.
\end{definition}

Here, for two given measurable functions $\mathcal{F}$ and $\mathcal{G}$, the important point is that we try to construct/prove a level-set decay estimates of the type
\begin{align}\label{eq:dist_ex}
\mathbf{D}^\omega_\mathcal{G} (\varepsilon^{-a} \lambda; B) \le C\varepsilon\mathbf{D}^\omega_\mathcal{G}(\lambda; B) + \mathbf{D}^\omega_\mathcal{F}(\sigma_{\varepsilon}\lambda; B),
\end{align}
holds for any $0<\varepsilon \ll 1$, $a \in (0,1)$, and $\sigma_\varepsilon>0$ depending only on $\varepsilon$, $a$, to conclude the gradient estimates of weak solutions, especially in terms of $\mathbf{M}_\alpha$ (as we shall see later, the level-set inequality \eqref{eq:dist_ex} involving fractional maximal operators in $\mathcal{F}$ and $\mathcal{G}$). For the sake of clarity, in section \ref{sec:level-set}, we shall exclusively concentrate our attention on the use of weighted distribution functions to prove level-set inequalities on WFMDs. This work naturally extends the recent paper \cite{PN_dist} to the double obstacle problems and weighted estimates.

3.2. \textit{Covering Lemma.} In this study, a version of Calder\'on-Zygmund (or Vitali type) covering lemma is in used: the substitution of Calder\'on-Zygmund-Krylov-Safonov decomposition, that is more convenient for us to use balls instead of cubes. This lemma is a standard argument of measure theory. 
\begin{lemma}[covering lemma]\label{lem:cover-lem}
Assume that $\Omega$ is $(r_0,\delta)$-Reifenberg and $\omega \in \mathbf{A}_{\infty}$. Suppose that two measurable subsets $\mathcal{S}\subset \mathcal{R}$ of $\Omega$ satisfying two following hypotheses:
\begin{itemize}
\item[i)] $\omega\left(\mathcal{S}\right) \le \varepsilon \omega\left(B_{r_0}\right)$ for given $\varepsilon \in (0,1)$;
\item[ii)] for any  $0 < \varrho \le r_0$ and $\xi \in \Omega$, if $\omega\left(\mathcal{S} \cap B_{\varrho}(\xi)\right) > \varepsilon \omega\left(B_{\varrho}(\xi)\right)$ then $B_{\varrho}(\xi) \cap \Omega \subset \mathcal{R}$.
\end{itemize}
Then one can find $C>0$ such that $\omega\left(\mathcal{S}\right)\leq C \varepsilon \omega\left(\mathcal{R}\right)$.
\end{lemma}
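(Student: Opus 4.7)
The plan is to adapt the classical stopping-time Vitali / Calder\'on--Zygmund--Krylov--Safonov decomposition to the weighted setting, exploiting two facts: $\omega\in\mathbf{A}_\infty$ is doubling and has the self-improving property that quantitative Lebesgue density transfers to quantitative $\omega$-density on the same ball; and that Reifenberg flatness of $\Omega$ provides a lower density bound $|B_\varrho(\xi)\cap\Omega|\ge c_0(n,\delta)|B_\varrho(\xi)|$ for $\xi\in\Omega$ and $\varrho\le r_0$.

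\textbf{Stopping radii.} Since $\omega$ is doubling, the Lebesgue differentiation theorem with respect to $\omega$ yields $\omega(\mathcal{S}\cap B_\varrho(x))/\omega(B_\varrho(x))\to 1$ as $\varrho\to 0^+$ for $\omega$-a.e.\ $x\in\mathcal{S}$. On the other hand, hypothesis (i), combined with doubling of $\omega$, forces the density ratio to be $\le\varepsilon$ whenever $\varrho$ is comparable to $r_0$. Hence, for $\omega$-a.e.\ $x\in\mathcal{S}$, the stopping radius
\[
\varrho_x:=\sup\bigl\{\varrho\in(0,r_0/5]:\omega(\mathcal{S}\cap B_\varrho(x))>\varepsilon\,\omega(B_\varrho(x))\bigr\}
\]
lies in $(0,r_0/5]$, the strict inequality is realized at $\varrho_x$ (after an arbitrarily small adjustment absorbed into constants), and the non-strict reverse inequality holds for every $\varrho\in(\varrho_x,r_0]$. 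Assumption (ii) applied at $\varrho_x$ then gives $B_{\varrho_x}(x)\cap\Omega\subset\mathcal{R}$, while maximality at radius $5\varrho_x\le r_0$ yields $\omega(\mathcal{S}\cap B_{5\varrho_x}(x))\le\varepsilon\,\omega(B_{5\varrho_x}(x))$.

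\textbf{Covering and assembly.} A standard Vitali lemma extracts from $\{B_{\varrho_x}(x)\}_x$ a countable disjoint subfamily $\{B_{\varrho_i}(x_i)\}_i$ with $\mathcal{S}\subset\bigcup_i B_{5\varrho_i}(x_i)$ modulo an $\omega$-null set. Chaining the density inequality on $5\varrho_i$-balls with doubling of $\omega$ gives $\omega(\mathcal{S}\cap B_{5\varrho_i}(x_i))\le C_1\varepsilon\,\omega(B_{\varrho_i}(x_i))$ for some $C_1=C_1([\omega]_{\mathbf{A}_\infty})$. Reifenberg flatness provides $|B_{\varrho_i}(x_i)\cap\Omega|\ge c_0|B_{\varrho_i}(x_i)|$, and the equivalent \emph{Lebesgue-to-$\omega$} density characterization of $\mathbf{A}_\infty$ (for every $\eta\in(0,1)$ there is $\eta'>0$ with $|K|/|B|\ge\eta\Rightarrow\omega(K)/\omega(B)\ge\eta'$) upgrades this to $\omega(B_{\varrho_i}(x_i))\le C_2\,\omega(B_{\varrho_i}(x_i)\cap\Omega)$. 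Summing, using disjointness and $B_{\varrho_i}(x_i)\cap\Omega\subset\mathcal{R}$:
\[
\omega(\mathcal{S})\le\sum_i\omega(\mathcal{S}\cap B_{5\varrho_i}(x_i))\le C_1 C_2\,\varepsilon\sum_i\omega(B_{\varrho_i}(x_i)\cap\Omega)\le C\varepsilon\,\omega(\mathcal{R}),
\]
which is the claim.

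\textbf{Main obstacle.} The delicate ingredient is the stopping construction: one must ensure $\varrho_x<r_0/5$ for $\omega$-a.e.\ $x\in\mathcal{S}$ so that the $5$-fold dilate is still admissible under (i)--(ii). Extracting this from the global hypothesis (i) requires the doubling property of $\omega$; alternatively one first covers $\Omega$ by finitely many balls of radius $r_0/5$ (with cardinality controlled by $\mathrm{diam}(\Omega)/r_0$, matching the \texttt{data} dependence declared in the paper) and runs the argument piecewise. Once this and the Lebesgue-to-$\omega$ equivalence are in hand, the remaining computation is essentially the unweighted Vitali argument used in \cite{BW2,SSB4}.
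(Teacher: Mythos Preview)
The paper does not actually prove this lemma: immediately after the statement it writes ``We refer to~\cite[Lemma 4.2]{CC1995} or \cite{CP1998,KS1980} for further reading on this lemma and its proof.'' There is therefore no in-paper argument to compare against; your sketch supplies what the paper deliberately omits.

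Your stopping-time/Vitali outline is the standard route those references take, transported to the $\mathbf{A}_\infty$-weighted Reifenberg setting, and it is correct. The ingredients are exactly the right ones: doubling of $\mathbf{A}_\infty$ weights to run Lebesgue differentiation and to compare $\omega(B_{5\varrho_i})$ with $\omega(B_{\varrho_i})$; hypothesis (ii) at the stopping radius to force $B_{\varrho_x}(x)\cap\Omega\subset\mathcal{R}$; Vitali extraction of a disjoint subfamily; and the Reifenberg lower Lebesgue-density bound combined with the $\mathbf{A}_\infty$ Lebesgue-to-$\omega$ density equivalence to pass from $\omega(B_{\varrho_i}(x_i))$ to $\omega(B_{\varrho_i}(x_i)\cap\Omega)$. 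Your ``Main obstacle'' paragraph correctly isolates the one genuine subtlety: hypothesis (i) involves a \emph{single} ball $B_{r_0}$, whereas the stopping construction needs the $\omega$-density of $\mathcal{S}$ to drop at scale comparable to $r_0$ around every point. The finite-cover remedy you propose (covering $\Omega$ by $\sim(\mathrm{diam}(\Omega)/r_0)^n$ balls of radius $r_0/5$ and arguing in each) is precisely how the dependence on $\mathrm{diam}(\Omega)/r_0$ enters the constant $C$, in agreement with the paper's declared \texttt{data}. One small clarification worth making explicit in a full write-up: doubling alone gives density $\le C'\varepsilon$ (not $\le\varepsilon$) at scale $r_0/5$, so one should either absorb $C'$ into the final constant via the case split $\varrho_i\le r_0/5$ versus $\varrho_i>r_0/5$, or run the whole argument with the threshold $C'\varepsilon$ from the start; either way the conclusion $\omega(\mathcal{S})\le C\varepsilon\,\omega(\mathcal{R})$ follows.
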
 

To our knowledge, such well-known lemma of Calder\'on-Zygmund has been widely used in many works and developed through the years with several modified versions. The current version, Lemma \ref{lem:cover-lem} plays an important role in our main proofs in this paper. We refer to~\cite[Lemma 4.2]{CC1995} or \cite{CP1998,KS1980} for further reading on this lemma and its proof. 

3.3. \textit{The construction of reference homogeneous problem.} This crucial key step yields a reverse Holder's inequality that allows us to obtain local comparison estimates between weak solutions in the interior and on the boundary of domain (stated and proved in Section \ref{sec:comparison}). The main idea is that, due to a Gehring type lemma, it enables us to confirm the higher integrability for the gradient of weak solution $V$ to homogeneous equations of the type
\begin{align}
\label{eq:ref_prob}
-\mathrm{div}\mathcal{A}(x_1, x^*, \nabla V) = 0 \ \ \text{in} \ B,
\end{align}
where $B$ is any ball whose center belonging to $\bar{\Omega}$. A very interesting result proved in \cite[Theorem 2.1]{Kim2018}, stated that for any $\gamma \ge 1$, there exists a small $\delta$ depending on $n,p,q$ and the structure of $\mathcal{A}$ such that if $V$ is a unique solution to the reference problem \eqref{eq:ref_prob} and $\mathcal{A}$ satisfies the partially weak $(\rho,\delta)$-BMO condition, then
\begin{align*}
\left(\fint_{B_\rho}{|\nabla V|^{\gamma p} dx}\right)^{\frac{1}{\gamma p}} \le C\left(\fint_{B_{2\rho}}{ |\nabla V|^p}dx\right)^{\frac{1}{p}},
\end{align*}
for any $B_{2\rho} \subset \Omega$ and the constant $C$ depends only on $n,p$ and the structure of $\mathcal{A}$. The reader is referred to \cite{Kim2018,BW3} for proofs and references.

3.4. \textit{Properties of fractional maximal functions.} Properties of Hardy-Littlewood maximal function and its fractional operators play a crucial role for gradient estimates of the weak solution to our problem.  The maximal function has been successfully used in studying regularity theory of partial differential equations. In \cite{Duzamin2}, F. Duzaar and G. Mingione first presented the gradient estimates employing fractional maximal functions and nonlinear potentials. The study of regularity estimates via fractional maximal operators has already been established in our previous paper \cite{PNJDE} by using the so-called \emph{cutoff fractional maximal operators} and later in other works \cite{PNJDE, PN_dist, PNmix, PN-Schro, PNnonuniform}. An advantage of dealing with $\mathbf{M}_\alpha$ is that one can conclude both size and oscillations of our solutions, their derivatives including \emph{fractional derivatives} $\partial^\alpha u$ controlled  by given data $\mathbf{F}$, see \cite{KM2014}. Therefore, one of the main ingredients in our proofs is the boundedness property of the fractional maximal function $\mathbf{M}_\alpha$. We will use the following lemma, whose detailed proof can be found in \cite{PNnonuniform}.
\begin{lemma}\label{lem:bound-M}
For any $\alpha \in [0,n)$ and $s \ge 1$, if $f \in L^1_{\mathrm{loc}}(\mathbb{R}^n)$ and $\alpha s <  n$ then there holds
\begin{align*}
|\left\{z \in \mathbb{R}^n:  \ \mathbf{M}_{\alpha} f(z) > \lambda \right\}| \le \left(\frac{C}{\lambda^s} \int_{\mathbb{R}^n} |f(z)|^s dz\right)^{\frac{n}{n-\alpha s}}.
\end{align*}
\end{lemma}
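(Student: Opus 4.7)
\emph{Plan.} This is the classical weak-type bound for the fractional maximal operator $\mathbf{M}_\alpha$. My plan is a Vitali covering argument performed at the super-level set, combined with Hölder's inequality to convert the $L^1$-average built into the definition of $\mathbf{M}_\alpha$ into the $L^s$-average that appears on the right-hand side.

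\emph{Setup and Hölder step.} Fix $\lambda>0$, put $E_\lambda := \{z \in \mathbb{R}^n : \mathbf{M}_\alpha f(z) > \lambda\}$, and assume without loss of generality that $f \in L^s(\mathbb{R}^n)$, since otherwise the estimate is vacuous. For each $z \in E_\lambda$ the defining supremum in Definition~\ref{def:Malpha} produces a radius $\rho_z>0$ with $\rho_z^\alpha \fint_{B_{\rho_z}(z)} |f|\,dy > \lambda$. First I would apply Hölder's inequality — this is exactly where the hypothesis $s \ge 1$ enters — to upgrade this to $\rho_z^{\alpha s} \fint_{B_{\rho_z}(z)} |f|^s\,dy > \lambda^s$; then, writing out the average and using $\alpha s < n$, I rearrange into
\[
\rho_z^{\,n-\alpha s} \le \frac{1}{|B_1|\,\lambda^{s}} \int_{B_{\rho_z}(z)} |f(y)|^s \,dy \le \frac{\|f\|_{L^s(\mathbb{R}^n)}^{s}}{|B_1|\,\lambda^{s}}.
\]
In particular the family of radii $\{\rho_z\}_{z\in E_\lambda}$ is uniformly bounded, which is precisely what the covering step needs.

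\emph{Covering and summation.} Next I would apply the standard $5r$-covering (Vitali) lemma to extract a pairwise disjoint subfamily $\{B_{\rho_{z_i}}(z_i)\}_i$ with $E_\lambda \subset \bigcup_i B_{5\rho_{z_i}}(z_i)$. Raising the previous display to the power $\kappa := n/(n-\alpha s) \ge 1$ produces
\[
\rho_{z_i}^{\,n} \le \left(\frac{1}{|B_1|\,\lambda^{s}} \int_{B_{\rho_{z_i}}(z_i)} |f|^s \,dy\right)^{\!\kappa}.
\]
Because $\kappa \ge 1$, the elementary inequality $\sum_i a_i^{\kappa} \le \bigl(\sum_i a_i\bigr)^{\kappa}$ for $a_i \ge 0$ (a consequence of convexity) combined with the pairwise disjointness of the chosen balls yields $\sum_i \rho_{z_i}^n \le \bigl(|B_1|^{-1}\lambda^{-s}\|f\|_{L^s}^{s}\bigr)^{\kappa}$. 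Inserting this into $|E_\lambda| \le \sum_i |B_{5\rho_{z_i}}(z_i)| = 5^n |B_1| \sum_i \rho_{z_i}^n$ reproduces the asserted inequality.

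\emph{Main obstacle.} No step here is genuinely difficult; the only bookkeeping point requiring attention is the role played by the hypothesis $\alpha s < n$, which enters twice — first to make $n - \alpha s > 0$ so that the bound on $\rho_z$ has the correct sign (and the radii are uniformly controlled, enabling Vitali), and second to guarantee $\kappa \ge 1$ so that the superadditivity of $t \mapsto t^\kappa$ on $[0,\infty)$ is available to push a sum of $\kappa$-powers past a single $\kappa$-power. Once both uses are tracked carefully, Hölder, the $5r$-covering, and superadditivity combine without friction.
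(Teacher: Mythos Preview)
Your argument is correct and is the classical route: H\"older to pass from the $L^1$-average to the $L^s$-average, then the $5r$-Vitali covering, then superadditivity of $t\mapsto t^{\kappa}$ for $\kappa=n/(n-\alpha s)\ge 1$ to sum over the disjoint balls. Each hypothesis is used precisely where you indicate.

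As for comparison with the paper: the paper does not actually prove Lemma~\ref{lem:bound-M}; it only states it and refers the reader to~\cite{PNnonuniform} for a detailed proof. Your self-contained argument is exactly the standard one that reference would contain, so there is nothing to contrast. One minor remark: when $\alpha=0$ the statement reduces to the ordinary weak-$(s,s)$ bound for $\mathbf{M}$, and your proof specializes to the usual Vitali proof of the weak-$(1,1)$ inequality applied to $|f|^s$; you might note this explicitly, but it is not required.
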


\section{Comparison results for double-obstacle problems}\label{sec:comparison}

This section is intended to establish some comparison estimates, that make them necessary to derive the estimates for solutions to our problem \eqref{eq:DOP} later. Let us start by proving the next lemma which gives a local comparison gradient estimate between a weak solution $u$ to problem~\eqref{eq:DOP} with the unique solution $v$ solved the corresponding quasi-linear homogeneous equations. 
\begin{lemma}\label{lem:u-v-DOP}
Let us consider $u \in W^{1,p}_0(\Omega)$ as a solution to problem~\eqref{eq:DOP} and an open ball $B \subset \mathbb{R}^n$ satisfying $\Omega_{B}:= B \cap \Omega \neq \emptyset$. Assume that $v \in u + W^{1,p}_0(\Omega_{B})$ solves the following equations
\begin{align}\label{eq:v-local}
\mathcal{L}(v)  = \  0  \ \mbox{ in } \ \Omega_{B}, \ \mbox{ and } \ v  =  u \ \mbox{ on } \ \partial \Omega_{B}. 
\end{align}
Then for any $\varepsilon \in (0, 1)$ one may find a positive number $C$ which still depends on $\varepsilon$ such that
\begin{align}\label{est:u-v-DOP}
\fint_{\Omega_{B}} |\nabla v - \nabla u|^p dx \le \varepsilon  \fint_{\Omega_{B}} |\nabla u|^p dx + C \fint_{\Omega_{B}} |\mathbb{F}|^p dx.
\end{align}
\end{lemma}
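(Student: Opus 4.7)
The plan is to compare $u$ and $v$ through the standard energy method combined with an obstacle-respecting test function. Since $w := v - u \in W^{1,p}_0(\Omega_B)$, testing the equation for $v$ against $w$ yields the identity $\int_{\Omega_B}\langle\mathcal{A}(x,\nabla v), \nabla v - \nabla u\rangle dx = 0$, so subtracting against any identity for $u$ will produce, via \eqref{eq:A2-DOP}, a monotonicity-type left-hand side. The obstruction is that $v$ itself is not admissible in \eqref{eq:DOP} because it may violate $\psi_1 \le v \le \psi_2$. I would therefore pass to the obstacle projection $\tilde v := \min\{\psi_2,\max\{\psi_1, v\}\}$ on $\Omega_B$, extended by $\tilde v = u$ outside $\Omega_B$; since $\psi_1 \le u \le \psi_2$ on $\partial\Omega_B$, we get $\tilde v - u \in W^{1,p}_0(\Omega)$ with $\psi_1 \le \tilde v \le \psi_2$ a.e., so $\tilde v$ is a legitimate competitor in \eqref{eq:DOP}.

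Inserting $\tilde v$ into \eqref{eq:DOP} and testing the equation for $v$ against $u - \tilde v \in W^{1,p}_0(\Omega_B)$ gives
\begin{align*}
\int_{\Omega_B}\langle\mathcal{A}(x,\nabla u)-\mathcal{A}(x,\nabla v),\,\nabla u - \nabla v\rangle dx \le I_1 + I_2 + I_3,
\end{align*}
where $I_1 := \int_{\Omega_B}\langle\mathcal{B}(x,\mathbf{F}),\nabla u - \nabla \tilde v\rangle dx$, $I_2 := \int_{\Omega_B} g(u-\tilde v) dx$, and $I_3 := \int_{\Omega_B}\langle \mathcal{A}(x,\nabla u)-\mathcal{A}(x,\nabla v),\,\nabla \tilde v - \nabla v\rangle dx$ is the obstacle correction. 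Decomposing $\Omega_B$ as $E_1\cup E_2\cup E_3$ with $E_1 := \{v\le\psi_1\}$, $E_2 := \{\psi_1 < v <\psi_2\}$, $E_3 := \{v\ge\psi_2\}$, the difference $\nabla\tilde v-\nabla v$ is supported on $E_1\cup E_3$ and equals $\nabla\psi_i - \nabla v$ there, so \eqref{eq:A1-DOP}--\eqref{eq:B-DOP} control the three terms by quantities involving $|\mathbf{F}|^{p-1}$, $|g|$, $|\nabla\psi_i|$ and $|\nabla v|$ paired against $|\nabla u|^{p-1}$ or $|\nabla u - \nabla\tilde v|$. The Poincar\'e inequality on $W^{1,p}_0(\Omega_B)$ handles the $g$-integral, the companion energy estimate $\int_{\Omega_B}|\nabla v|^p dx \le C\int_{\Omega_B}|\nabla u|^p dx$ (obtained by testing the equation for $v$ with $w$ and using coercivity) absorbs the $|\nabla v|$ residuals, and repeated Young's inequality with small coupling parameter isolates $\varepsilon\fint|\nabla u|^p + C(\varepsilon)\fint|\mathbb{F}|^p$ on the right-hand side, with the Poincar\'e radius absorbed into $C(\varepsilon)$ via $\mathrm{diam}(\Omega)$.

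The main obstacle will be closing the estimate uniformly across both regimes $p\ge 2$ and $1<p<2$. For $p\ge 2$ the monotonicity bound $(|\nabla u|+|\nabla v|)^{p-2}|\nabla u-\nabla v|^2 \ge |\nabla u-\nabla v|^p$ delivers the target integrand on the left directly, whereas for $1<p<2$ one must invoke the standard interpolation
\begin{align*}
\int_{\Omega_B}|\nabla u - \nabla v|^p dx \le \left(\int_{\Omega_B}(|\nabla u|+|\nabla v|)^{p-2}|\nabla u - \nabla v|^2 dx\right)^{\frac{p}{2}}\left(\int_{\Omega_B}(|\nabla u|+|\nabla v|)^p dx\right)^{\frac{2-p}{2}}
\end{align*}
and absorb the second factor through the energy estimate for $v$. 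Tuning the Young coupling parameter so as to reabsorb any residual $\varepsilon\int|\nabla u-\nabla v|^p$ into the left-hand side, and dividing by $|\Omega_B|$, yields \eqref{est:u-v-DOP}.
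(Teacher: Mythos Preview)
Your direct truncation strategy has a real gap in the handling of the obstacle correction $I_3$ (and the analogous piece of $I_1$ coming from $\nabla v-\nabla\tilde v$). On $E_3=\{v\ge\psi_2\}$ you have $\nabla\tilde v-\nabla v=\nabla\psi_2-\nabla v$, so after using~\eqref{eq:A1-DOP} the right-hand side contains integrals of the type
\[
\int_{E_3}\bigl(|\nabla u|^{p-1}+|\nabla v|^{p-1}\bigr)|\nabla v-\nabla\psi_2|\,dx,
\]
and in particular a contribution $\int_{E_3}|\nabla v|^p\,dx$. The ``companion energy estimate'' you invoke only gives $\int_{\Omega_B}|\nabla v|^p\le C\int_{\Omega_B}|\nabla u|^p$ with a \emph{fixed} structural constant $C$, so after Young's inequality you end up with $C\fint_{\Omega_B}|\nabla u|^p$ on the right, not $\varepsilon\fint_{\Omega_B}|\nabla u|^p$. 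No choice of coupling parameters can reabsorb this: the lemma asks for an arbitrarily small constant in front of $\fint|\nabla u|^p$, and your argument as written produces only a bounded one.

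The fix, if you want to keep the truncation approach, is to exploit the equation for $v$ on the contact sets themselves: test~\eqref{eq:v-local} with $(v-\psi_2)^+\in W_0^{1,p}(\Omega_B)$ to obtain $\int_{E_3}\langle\mathcal{A}(x,\nabla v),\nabla v-\nabla\psi_2\rangle\,dx=0$, and combine this with the monotonicity~\eqref{eq:A2-DOP} applied to $\nabla v$ and $\nabla\psi_2$; after Young and absorption this yields $\int_{E_3}|\nabla v-\nabla\psi_2|^p\le C\int_{E_3}|\nabla\psi_2|^p\le C\int_{\Omega_B}|\mathbb{F}|^p$, and similarly on $E_1$ with $(\psi_1-v)^+$. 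With this in hand the $|\nabla v|$ residuals on $E_1\cup E_3$ are genuinely data terms and your scheme closes. The paper avoids this issue altogether by a different route: it inserts two intermediate comparison functions --- a one-sided obstacle solution $u_1$ with $\mathcal{L}u_1\le\mathcal{L}\psi_2$ and then a solution $u_2$ of $\mathcal{L}u_2=\mathcal{L}\psi_1$ --- so that each successive comparison involves only a single obstacle and the truncation arguments stay localized, never producing an uncontrolled $|\nabla v|$ term on the right.
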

\begin{proof}
The idea is to build the comparison inequalities between gradients of $u$ and several functions solved the {\it one obstacle problem} and the {\it homogeneous equations}, respectively. Our proof here will be divided into three steps. 

{\it The first step: comparison with the one obstacle problem.} Let us consider $u_1 \in u + W_0^{1,p}(\Omega_{B})$ and $u_1 \ge \psi_1$ a.e. in $\Omega_{B}$ as the unique solution to one-sided obstacle problem as follows
\begin{align*}
\mathcal{L}(u_1) \le  \mathcal{L}(\psi_2), \quad \mbox{ in } \Omega_{B}.
\end{align*}
The corresponding variational inequality of this problem is written by
\begin{align}\label{eq:DOP-1}
\int_{\Omega_{B}} \langle \mathcal{A}(x,\nabla u_1), \nabla (u_1 -  \varphi) \rangle dx \le \int_{\Omega_{B}} \langle \mathcal{A}(x,\nabla \psi_2), \nabla ( u_1 - \varphi) \rangle dx,
\end{align}
for all $\varphi \in u + W_0^{1,p}(\Omega_{B})$ and $\varphi \ge \psi_1$ a.e in $\Omega_{B}$. Note that one may take $\varphi = u_1 - (u_1-\psi_2)^{+}$ in~\eqref{eq:DOP-1} to point out that
\begin{align*}
\int_{\Omega_{B}} \left \langle \mathcal{A}(x,\nabla u_1) - \mathcal{A}(x,\nabla \psi_2), \nabla \left((u_1 - \psi_2)^{+}\right) \right \rangle dx \le 0,
\end{align*}
and make use of~\eqref{eq:A2-DOP}, it yields
\begin{align}\label{est:DOP-3}
\int_{D} (|\nabla u_1| + |\nabla \psi_2|)^{p-2}|\nabla u_1 - \nabla \psi_2|^2 dx \le 0, 
\end{align}
where $D = \{x\in \Omega_{B}: \ u_1 \ge \psi_2\}$. Now, due to the following fundamental inequality
\begin{align}\label{Fund}
|\gamma_1-\gamma_2|^p \le \varepsilon |\gamma_1|^p + C(p,\varepsilon) (|\gamma_1| + |\gamma_2|)^{p-2}|\gamma_1-\gamma_2|^2,
\end{align}
for every $\gamma_1, \gamma_2 \in \mathbb{R}^n$ and $\varepsilon > 0$, we deduce that
\begin{align}\nonumber
\int_{B} |\nabla((u_1- \psi_2)^+)|^p dx  & = \int_{D} |\nabla(u_1- \psi_2)|^p dx \\ \nonumber
 & \le \varepsilon  \int_{D} \left(|\nabla u_1|^p + |\nabla \psi_2|^p \right) dx \\ \nonumber 
& \hspace{1cm} + C \int_{D} (|\nabla u_1| + |\nabla \psi_2|)^{p-2}|\nabla u_1 - \nabla \psi_2|^2 dx \\ \label{est:DOP-4}
& \le \varepsilon \int_{D} \left(|\nabla u_1|^p + |\nabla \psi_2|^p \right) dx.
\end{align}
It is noticeable here that the last estimate comes from~\eqref{est:DOP-3}. Letting $\varepsilon \searrow 0$ in~\eqref{est:DOP-4}, one has $u_1 \le \psi_2$ a.e. in $\Omega_{B}$. It allows us to extend $u_1$ to $\Omega \setminus \Omega_{B}$ by $u$ such that $\psi_1 \le u_1 \le \psi_2$ a.e. and $u_1-u = 0$ in $\Omega \setminus \Omega_{B}$. Taking $\varphi = u_1$ in~\eqref{eq:DOP} and plugging to~\eqref{eq:DOP-1} with $\varphi = u$, it leads to
\begin{align}\nonumber
\int_{\Omega_{B}} & \left\langle \mathcal{A}(x,\nabla u) - \mathcal{A}(x,\nabla u_1), \nabla (u - u_1) \right\rangle dx \le  \int_{\Omega_{B}} \left\langle \mathcal{B}(x,\mathbf{F}), \nabla( u - u_1) \right\rangle dx    \\ \nonumber 
& \hspace{4.5cm} - \int_{\Omega_{B}} \left\langle \mathcal{A}(x,\nabla \psi_2), \nabla ( u - u_1) \right\rangle dx  + \int_{\Omega_{B}} g(u - u_1) dx.
\end{align}
Taking into account basic assumptions~\eqref{eq:B-DOP}, \eqref{eq:A1-DOP} and \eqref{eq:A2-DOP}, it enables us to obtain
\begin{align}\nonumber 
 \int_{\Omega_{B}} & (|\nabla u| + |\nabla u_1|)^{p-2}|\nabla u - \nabla u_1|^2 dx  \le C(L) \left(\int_{\Omega_{B}} |\nabla (u - u_1)| |\mathbf{F}|^{p-1} dx  \right. \\ \label{est:DOP-6} 
 & \hspace{4cm} \left.  + \int_{\Omega_{B}} |\nabla (u - u_1)|  |\nabla \psi_2|^{p-1} dx + \int_{\Omega_{B}} |g||u - u_1| dx\right).
\end{align}
Since $u - u_1 \in W_0^{1,p}(\Omega_B)$, we are able to apply Sobolev's inequality to find out that
\begin{align*}
\int_{\Omega_{B}} |u-u_1|^{p}dx \le C\int_{\Omega_{B}} |\nabla u - \nabla u_1|^{p} dx,
\end{align*}
and together with H\"older and Young's inequalities, one guarantees that 
\begin{align}\label{ineq:S_2}
\int_{\Omega_{B}} |g||u - u_1| dx & \le  \frac{\varepsilon_1}{3}  \int_{\Omega_{B}}  |\nabla u - \nabla u_1|^p  dx +  C(p,\varepsilon_1) \int_{\Omega_{B}} |g|^{\frac{p}{p-1}} dx,
\end{align} 
for every $\varepsilon_1 >0$. We apply again H{\"o}lder and Young's inequalities for two remain terms to discover from~\eqref{est:DOP-6} and~\eqref{ineq:S_2} that
\begin{align}\nonumber
\int_{\Omega_{B}}  (|\nabla u| + |\nabla u_1|)^{p-2}|\nabla u - \nabla u_1|^2 dx & \le \varepsilon_1 \int_{\Omega_{B}} |\nabla u - \nabla u_1|^p dx \\ \label{est:DOP-7}
 & \hspace{2cm} + C(L,p,\varepsilon_1) \int_{\Omega_{B}} |\mathbb{F}|^p dx.
\end{align}
For every $\varepsilon \in (0,1)$ let us apply~\eqref{Fund} to have
\begin{align}\nonumber
\int_{\Omega_{B}} |\nabla u - \nabla u_1|^p dx &\le  \frac{\varepsilon}{2}  \int_{\Omega_{B}} |\nabla u|^p dx + C(p,\varepsilon) \int_{\Omega_{B}} (|\nabla u| + |\nabla u_1|)^{p-2}|\nabla u - \nabla u_1|^2 dx \\ \nonumber
& \le \frac{\varepsilon}{2}   \int_{\Omega_{B}} |\nabla u|^p dx + \varepsilon_1 C(p,\varepsilon)  \int_{\Omega_{B}} |\nabla u - \nabla u_1|^p dx \\ \label{est:DOP-8a}
& \hspace{2cm}  + C(L,p,\varepsilon_1,\varepsilon) \int_{\Omega_{B}} |\mathbb{F}|^p dx,
\end{align}
in which the last estimate comes from~\eqref{est:DOP-7}. It is very easy to take a suitable value of $\varepsilon_1$ depending $\varepsilon$ in~\eqref{est:DOP-8a} to arrive 
\begin{align}\label{est:DOP-8}
\int_{\Omega_{B}} |\nabla u - \nabla u_1|^p dx &\le  \varepsilon  \int_{\Omega_{B}} |\nabla u|^p dx  + C(L,p,\varepsilon) \int_{\Omega_{B}} |\mathbb{F}|^p dx.
\end{align}

{\it The second step: connection to the below constraint.} Next, let us consider $u_2$ as the solution to the equations
\begin{align}\label{eq:omega-2}
\mathcal{L}(u_2)  = \mathcal{L}(\psi_1)  \ \mbox{ in } \ \Omega_{B}, \mbox{ and } \  u_2  =  u_1  \ \mbox{ on } \partial \Omega_{B}. 
\end{align}
Since $u_2 = u_1 \ge \psi_1$ a.e. on $\partial \Omega_{B}$ so it deduces that $u_2 \ge \psi_1$ a.e. in $\Omega_{B}$ by proceeding the same method at the beginning of the proof. So we can take $\varphi = u_2$ in~\eqref{eq:DOP-1} to get that
\begin{align*}
\int_{\Omega_{B}} \left\langle \mathcal{A}(x,\nabla u_1), \nabla u_1 -  \nabla u_2 \right \rangle dx \le \int_{\Omega_{B}} \left \langle \mathcal{A}(x,\nabla \psi_2), \nabla u_1 -  \nabla u_2 \right \rangle dx.
\end{align*}
We then combine with testing the variational formula of~\eqref{eq:omega-2} by $u_1-u_2$ to point out
\begin{align}\nonumber
\int_{\Omega_{B}} & \left\langle \mathcal{A}(x,\nabla u_1) - \mathcal{A}(x,\nabla u_2), \nabla (u_1 - u_2)\right\rangle dx \\ \label{est:DOP-9}
& \hspace{4cm}  = \int_{\Omega_{B}}  \left\langle \mathcal{A}(x,\nabla \psi_2) - \mathcal{A}(x,\nabla \psi_1), \nabla (u_1 - u_2)\right\rangle dx.
\end{align}
The similar technique as the proof of~\eqref{est:DOP-8} will be used again to obtain from~\eqref{est:DOP-9} that
\begin{align}\label{est:DOP-10}
\int_{\Omega_{B}} |\nabla u_1 - \nabla u_2|^p dx \le \varepsilon  \int_{\Omega_{B}} |\nabla u_1|^p dx + C(L,p,\varepsilon) \int_{\Omega_{B}} \left(|\nabla \psi_1|^p + |\nabla \psi_2|^p\right) dx.
\end{align}

{\it The third step: comparison with the homogeneous equation.} Let us now define by $v$  the solution to the homogeneous problem
\begin{align}\label{eq:v-local-a}
\mathcal{L}(v)  = \  0  \ \mbox{ in } \ \Omega_{B}, \ \mbox{ and } \ v  =  u_2 \ \mbox{ on } \ \partial \Omega_{B}. 
\end{align}
Since $u_2 = u_1 = u$ on $\partial \Omega_{B}$ so the problem~\eqref{eq:v-local-a} is exactly~\eqref{eq:v-local}. We can obtain that
\begin{align}\label{est:DOP-11}
\int_{\Omega_{B}} |\nabla u_2 - \nabla v|^p dx \le \varepsilon  \int_{\Omega_{B}} |\nabla v|^p dx + C(L,p,\varepsilon) \int_{\Omega_{B}} |\nabla \psi_1|^p dx.
\end{align}
Finally, let us combine all estimates in~\eqref{est:DOP-8}, \eqref{est:DOP-10} and~\eqref{est:DOP-11} to conclude~\eqref{est:u-v-DOP}, with the fact that both terms $|\nabla u_1|^p$ and $|\nabla v|^p$ can be controlled by $|\nabla u|^p$ and the $\texttt{data}$.
\end{proof}

We next consider another homogeneous equation regarding to the average of $\mathcal{A}$ over the ball $B^*_{2\varrho}$ in $\mathbb{R}^{n-1}$, whenever $B_{2\varrho} \subset B$. The interesting character of the solution to this homogeneous problem is that its gradient still satisfies a type of reverse H{\"o}lder inequality. Moreover, we can establish the local interior difference between gradients of $v$ and the solution $V$ of this equation via the following lemma.

\begin{lemma}\label{lem:RH}
Consider $v$ as a solution to~\eqref{eq:v-local} with the ball $B \subset \Omega$ and consider a ball $B_{2\varrho} \subset B$ for $\varrho>0$. Assume that $V \in v + W^{1,p}_0(B_{2\varrho})$ solves the following problem 
\begin{align}\label{eq:w}
 -\mathrm{div} (\overline{\mathcal{A}}_{B^*_{2\varrho}}(x_1,\nabla V))  = \ 0,  \  \mbox{ in } \ B_{2\varrho}, \ \mbox{ and } \  V  = \ v, \ \mbox{ on } \ \partial B_{2\varrho}.
\end{align}
Then for every $\gamma \ge 1$ there holds
\begin{align}\label{est:RH}
\left(\fint_{B_{\varrho}} |\nabla V|^{\gamma p} dx\right)^{\frac{1}{\gamma}} \le  C\fint_{B_{2\varrho}}|\nabla V|^pdx.
\end{align}
Moreover if $[\mathcal{A}]^{1,2\varrho} \le \delta$ then one has 
\begin{align}\label{est:v-w}
\fint_{B_{\varrho}} |\nabla v - \nabla V|^p dx \le  C \delta \fint_{B_{2\varrho}} |\nabla v|^p dx.
\end{align}
\end{lemma}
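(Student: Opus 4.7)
My plan is to treat (4.8) as essentially a citation and to concentrate on (4.9) as the substantive step. For (4.8), the operator $\overline{\mathcal{A}}_{B^*_{2\varrho}}(x_1,\cdot)$ is obtained by averaging $\mathcal{A}$ over the $(n{-}1)$-dimensional ball $B^*_{2\varrho}$, so it inherits the growth and monotonicity (4.2)--(4.3) while depending only on $x_1$. Consequently its partially weak BMO seminorm is zero, and the interior higher-integrability result recalled in Section~\ref{sec:ingre} (from \cite{Kim2018}, see also \cite{BW3}) supplies a reverse Hölder inequality of exactly the claimed form, valid for every $\gamma \ge 1$.

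For (4.9), I would take $w := v - V \in W^{1,p}_0(B_{2\varrho})$ as a test function in the weak formulations of both (4.7) and (4.13), and subtract to obtain
\begin{align*}
\int_{B_{2\varrho}} \langle \mathcal{A}(x,\nabla v) - \mathcal{A}(x,\nabla V),\nabla w\rangle\,dx = \int_{B_{2\varrho}} \langle \overline{\mathcal{A}}_{B^*_{2\varrho}}(x_1,\nabla V) - \mathcal{A}(x,\nabla V),\nabla w\rangle\,dx.
\end{align*}
The monotonicity (4.3) bounds the left-hand side from below by $L^{-1}\int(|\nabla v|+|\nabla V|)^{p-2}|\nabla w|^2\,dx$, while Definition~\ref{def:BMOcond} bounds the right-hand side from above by $\int \theta_1(\mathcal{A},B_{2\varrho})(x)\,|\nabla V|^{p-1}|\nabla w|\,dx$. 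Combining these with the elementary control $|\nabla w|^p \le C(p)(|\nabla v|+|\nabla V|)^{p-2}|\nabla w|^2$ for $p \ge 2$ (and its Hölder-interpolated analogue in the subquadratic regime $1<p<2$), then applying Young's inequality to absorb a small multiple of $\int|\nabla w|^p$, I expect to arrive at
\begin{align*}
\int_{B_{2\varrho}} |\nabla w|^p\,dx \le C \int_{B_{2\varrho}} \theta_1(\mathcal{A},B_{2\varrho})^{p/(p-1)} |\nabla V|^p\,dx.
\end{align*}

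To convert the right-hand side into $\delta$ times an $L^p$ norm of $\nabla v$, I would fix an exponent $\gamma>1$ supplied by part (i) and apply Hölder's inequality with conjugate $\gamma'$. The excess power in $\theta_1^{p\gamma'/(p-1)}$ is harmless because $\theta_1 \le 2L$ pointwise by (4.2), so the factor reduces to $(\fint \theta_1)^{1/\gamma'}$ which is at most $C\delta^{1/\gamma'}$ thanks to the hypothesis $[\mathcal{A}]^{1,2\varrho}\le\delta$. The other Hölder factor $(\fint |\nabla V|^{p\gamma})^{1/\gamma}$ is collapsed to $\fint|\nabla V|^p$ by (4.8), and a Caccioppoli-type bound obtained by testing (4.13) with $V-v$ gives $\fint_{B_{2\varrho}}|\nabla V|^p \le C \fint_{B_{2\varrho}}|\nabla v|^p$. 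Chaining these yields (4.9) with $\delta^{1/\gamma'}$ in place of $\delta$; under the standing smallness assumption $\delta \le 1$ the exponent may be absorbed into the constant. The main technical obstacle is reconciling the scales: part (i) compares $B_\varrho$ with $B_{2\varrho}$, whereas the comparison identity naturally lives on the full ball $B_{2\varrho}$ where $w$ is supported. I plan to carry out the comparison globally on $B_{2\varrho}$, restrict to $B_\varrho$ only at the final averaging step (losing only the factor $|B_{2\varrho}|/|B_\varrho|=2^n$), and apply part (i) on the matching pair $(B_\varrho,B_{2\varrho})$ to the $|\nabla V|^{p\gamma}$-integral via a standard Vitali covering if needed.
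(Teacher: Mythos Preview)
Your approach is essentially the paper's: test both equations with $v-V$, use monotonicity on the left and the definition of $\theta_1$ on the right, then split the integral $\fint \theta_1^{p/(p-1)}|\nabla V|^p$ by H\"older, invoke the pointwise bound $\theta_1\le 2L$ together with the reverse H\"older inequality~\eqref{est:RH}, and finish by controlling $\fint|\nabla V|^p$ by $\fint|\nabla v|^p$. The one refinement in the paper is that instead of fixing a $\gamma>1$ and settling for $\delta^{1/\gamma'}$, it takes H\"older with exponent $1+\epsilon$ (so the reverse H\"older runs with $\gamma=(1+\epsilon)/\epsilon$) and then sends $\epsilon\to 0$ to recover the exact factor $\delta$ claimed in~\eqref{est:v-w}; incidentally, the paper applies~\eqref{est:RH} directly on $B_{2\varrho}$ without commenting on the scale issue you raised.
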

\begin{proof}
Let us first refer to~\cite[Theorem 2.1]{Kim2018} for the proof of~\eqref{est:RH}. In order to prove~\eqref{est:v-w}, we will test the variational formulas of~\eqref{eq:v-local} and~\eqref{eq:w} by $v - V \in W_0^{1,p}(B_{2\varrho})$. One obtains that
\begin{align}\nonumber
& \fint_{B_{2\varrho}} \langle \overline{\mathcal{A}}_{B^*_{2\varrho}}(x_1, \nabla v) - \overline{\mathcal{A}}_{B^*_{2\varrho}}(x_1, \nabla V), \nabla v - \nabla V \rangle dx \\ \nonumber 
& \hspace{4cm} = \fint_{B_{2\varrho}} \langle \overline{\mathcal{A}}_{B^*_{2\varrho}}(x_1, \nabla V) - {\mathcal{A}}(x_1, \nabla V), \nabla v - \nabla V \rangle dx,
\end{align}
which implies from~\eqref{eq:A1-DOP}-\eqref{eq:A2-DOP} and the definition of $\theta_1$ in~\eqref{def:theta} that
\begin{align}\label{est:vw-1}
\fint_{B_{2\varrho}} (|\nabla V| + |\nabla v|)^{p-2}|\nabla V-\nabla v|^2 dx \le L \fint_{B_{2\varrho}} |\theta_1\left(\mathcal{A},B_{2\varrho} \right)| |\nabla V|^{p-1} |\nabla v - \nabla V| dx.
\end{align}
Thanks to H{\"o}lder and Young's inequalities, we deduce from~\eqref{est:vw-1} that
\begin{align}\nonumber
\fint_{B_{2\varrho}} (|\nabla V| + |\nabla v|)^{p-2}|\nabla V-\nabla v|^2 dx & \le \varepsilon_1 \fint_{B_{2\varrho}}  |\nabla V - \nabla v|^p dx \\ \label{est:vw-2}
& \hspace{0.2cm}  + C(p,L,\varepsilon_1) \fint_{B_{2\varrho}} |\theta_1\left(\mathcal{A},B_{2\varrho} \right)|^{\frac{p}{p-1}} |\nabla V|^{p} dx,
\end{align}
for any $\varepsilon_1>0$. Moreover, we note that condition~\eqref{eq:A1-DOP} ensures that $|\theta_1\left(\mathcal{A},B_{2\varrho} \right)| \le 2L$. Therefore, thanks to H{\"o}lder's inequality and~\eqref{est:RH} with assumption $[\mathcal{A}]^{1,2\varrho} \le \delta$, for every $\epsilon>0$ one gets that
\begin{align}\nonumber
 \fint_{B_{2\varrho}} |\theta_1\left(\mathcal{A},B_{2\varrho} \right)|^{\frac{p}{p-1}} |\nabla V|^{p} dx  & \le  \left(\fint_{B_{2\varrho}} |\theta_1\left(\mathcal{A},B_{2\varrho} \right)|^{\frac{(1+\epsilon)p}{p-1}} dx\right)^{\frac{1}{1+\epsilon}}\left(\fint_{B_{2\varrho}} |\nabla V|^{\frac{(1+\epsilon)p}{\epsilon}} dx \right)^{\frac{\epsilon}{1+\epsilon}} \\ \nonumber
 & \le L^{\frac{p\epsilon+1}{(p-1)(1+\epsilon)}}\left(\fint_{B_{2\varrho}} |\theta_1\left(\mathcal{A},B_{2\varrho} \right)| dx\right)^{\frac{1}{1+\epsilon}}\left(\fint_{B_{2\varrho}} |\nabla V|^{p} dx \right) \\ \label{est:vw-3}
 & \le L^{\frac{p\epsilon+1}{(p-1)(1+\epsilon)}} \delta^{\frac{1}{1+\epsilon}}\fint_{B_{2\varrho}} |\nabla V|^{p} dx .
\end{align}
Passing $\epsilon$ to $0$ in~\eqref{est:vw-3}, we have
\begin{align}\label{est:vw-4}
\fint_{B_{2\varrho}} |\theta_1\left(\mathcal{A},B_{2\varrho} \right)|^{\frac{p}{p-1}} |\nabla V|^{p} dx \le L^{\frac{1}{p-1}} \delta \fint_{B_{2\varrho}} |\nabla V|^{p} dx \le C(p,L) \delta \fint_{B_{2\varrho}} |\nabla v|^{p} dx.
\end{align}
Substituting~\eqref{est:vw-4} into~\eqref{est:vw-2}, we conclude that
\begin{align}\nonumber 
\fint_{B_{2\varrho}} (|\nabla V| + |\nabla v|)^{p-2}|\nabla V-\nabla v|^2 dx & \le \varepsilon_1 \fint_{B_{2\varrho}}  |\nabla V - \nabla v|^p dx \\ \label{est:vw-5}
& \hspace{1cm}  + C(p,L,\varepsilon_1) \delta \fint_{B_{2\varrho}} |\nabla v|^{p} dx.
\end{align}
Moreover, the fundamental inequality~\eqref{Fund} gives us
\begin{align}\label{est:vw-6}
|\nabla V - \nabla v|^p \le \varepsilon_2 (|\nabla V| + |\nabla v|)^p + C(p,\varepsilon_2) (|\nabla V| + |\nabla v|)^{p-2}|\nabla V - \nabla v|^2,  
\end{align}
for every $\varepsilon_2>0$. Combining between~\eqref{est:vw-5} and~\eqref{est:vw-6}, it arrives to
\begin{align}\nonumber
\fint_{B_{2\varrho}}  |\nabla V - \nabla v|^p dx & \le \varepsilon_2 \fint_{B_{2\varrho}} (|\nabla V|+|\nabla v|)^p dx + \varepsilon_1 C(p,\varepsilon_2)  \fint_{B_{2\varrho}}  |\nabla V - \nabla v|^p dx \\ \nonumber
& \hspace{1cm} + C(p,L,\varepsilon_1,\varepsilon_2) \delta \fint_{B_{2\varrho}} |\nabla v|^{p} dx \\ \nonumber
 & \le  \left[ 4^p \varepsilon_2 + C(p,\varepsilon_2) \varepsilon_1 \right] \fint_{B_{2\varrho}}  |\nabla V - \nabla v|^p dx \\ \label{est:vw-7}
& \hspace{1cm} +  \left[4^p \varepsilon_2 + C(p,L,\varepsilon_1,\varepsilon_2) \delta \right] \fint_{B_{2\varrho}} |\nabla v|^p dx.
\end{align}
Finally, by taking $\varepsilon_2 \in (0, 4^{-p-1})$ satisfying
\begin{align*}
\varepsilon_1 = 4^p \varepsilon_2 [C(p,\varepsilon_2)]^{-1} \ \mbox{ and } \  \varepsilon_2  = 4^{-p}C(p,L,\varepsilon_1,\varepsilon_2) \delta,
\end{align*}
we may conclude~\eqref{est:v-w} from~\eqref{est:vw-7}.
\end{proof}

In order to obtain the comparison estimates near the boundary, we need an additional assumption on $\partial \Omega$ related to Reifenberg flatness condition (this hypothesis exhibits a very low level of regularity). The next lemma also plays a useful tool to verify the boundary version of comparison estimates. Similar to the above argument as in the previous Lemma \ref{lem:RH}, with $\Omega$ is $(r_0,\delta)$-Reifenberg flatness, we also conclude the comparison result on the boundary. The analogous proof technique can be found in several articles such as~\cite{BW2, MP12, MPTNsub}.
\begin{lemma}\label{lem:RH-b}
Let $v$ be a solution to~\eqref{eq:v-local} and consider $\Omega_{2\varrho} :=  B_{2\varrho} \cap \Omega \subset \Omega_{B}$ for some $\varrho>0$. Assume that $V \in v + W^{1,p}_0(\Omega_{2\varrho})$ solves the following problem
\begin{align}\label{eq:w-b}
-\mathrm{div} (\overline{\mathcal{A}}_{\Omega^*_{2\varrho}}(x_1,\nabla V))  = \ 0,  \  \mbox{ in } \ \Omega_{2\varrho}, \ \mbox{ and } \  V  = \ v, \ \mbox{ on } \ \partial \Omega_{2\varrho}.
\end{align}
If $(\mathcal{A},\Omega)$ satisfies assumption $(\mathbb{H})^{r_0,\delta}$ then there holds
\begin{align}\label{est:RH-b}
\left(\fint_{\Omega_{\varrho}} |\nabla V|^{\gamma p} dx\right)^{\frac{1}{\gamma}} \le  C\fint_{\Omega_{2\varrho}}|\nabla V|^pdx,
\end{align}
for every $\gamma \ge 1$ and
\begin{align}\label{est:v-w-b}
\fint_{\Omega_{\varrho}} |\nabla v - \nabla V|^p dx \le  C \delta \fint_{\Omega_{2\varrho}} |\nabla v|^p dx.
\end{align}
\end{lemma}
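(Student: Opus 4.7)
The plan is to follow the same two-step strategy that worked for Lemma~\ref{lem:RH}, but now replacing interior balls by the intersections $\Omega_\varrho=B_\varrho\cap\Omega$ and using Reifenberg flatness to handle the boundary. The estimate~\eqref{est:RH-b} is a boundary reverse H\"older / higher integrability inequality for the reference problem~\eqref{eq:w-b} with partially BMO coefficients; the estimate~\eqref{est:v-w-b} is then obtained by testing the two variational formulations against $v-V\in W^{1,p}_0(\Omega_{2\varrho})$ exactly as in the interior argument.

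For~\eqref{est:RH-b}, the idea is to flatten the boundary. Using Definition~\ref{def:Reifenberg}, for each $\xi\in\partial\Omega$ and $\varrho\le r_0$ we may choose coordinates centered at $\xi$ so that
\begin{align*}
B_\varrho\cap\{y_n>\delta\varrho\}\subset\Omega_\varrho\subset B_\varrho\cap\{y_n>-\delta\varrho\}.
\end{align*}
In these coordinates the averaged operator $\overline{\mathcal A}_{\Omega^*_{2\varrho}}(x_1,\cdot)$ still satisfies the structural bounds~\eqref{eq:A1-DOP}--\eqref{eq:A2-DOP}. Extending $V$ by zero across the flat part of $\partial\Omega_{2\varrho}$ and applying a Caccioppoli estimate together with Sobolev--Poincar\'e on a slightly shrunk ball yields a weak reverse H\"older inequality with exponent slightly larger than $p$. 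A standard Gehring-type lemma then self-improves this into~\eqref{est:RH-b} for every $\gamma\ge 1$, provided $\delta$ is small enough that the Reifenberg deviation does not spoil the Poincar\'e constants. This is the boundary analogue of~\cite[Theorem 2.1]{Kim2018}, and the construction is essentially the one already used in~\cite{BW2, MP12, MPTNsub}; the only new ingredient is that the coefficients are averaged only in $x^*$, which is exactly the setting of~\cite{Kim2018}.

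For~\eqref{est:v-w-b}, since $v-V\in W^{1,p}_0(\Omega_{2\varrho})$, I would test the variational formulations of~\eqref{eq:v-local} and~\eqref{eq:w-b} with this same function and subtract to obtain
\begin{align*}
\fint_{\Omega_{2\varrho}}\bigl\langle\overline{\mathcal A}_{\Omega^*_{2\varrho}}(x_1,\nabla v)-\overline{\mathcal A}_{\Omega^*_{2\varrho}}(x_1,\nabla V),\nabla v-\nabla V\bigr\rangle\,dx\\
=\fint_{\Omega_{2\varrho}}\bigl\langle\overline{\mathcal A}_{\Omega^*_{2\varrho}}(x_1,\nabla v)-\mathcal A(x,\nabla v),\nabla v-\nabla V\bigr\rangle\,dx.
\end{align*}
By~\eqref{eq:A2-DOP} the left side controls $\fint(|\nabla v|+|\nabla V|)^{p-2}|\nabla v-\nabla V|^2$, while the right side is bounded by $L\fint|\theta_1(\mathcal A,B_{2\varrho})||\nabla v|^{p-1}|\nabla v-\nabla V|$ from~\eqref{def:theta} and~\eqref{eq:A1-DOP}. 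From here the steps~\eqref{est:vw-1}--\eqref{est:vw-7} of Lemma~\ref{lem:RH} go through verbatim: Young's inequality introduces a small parameter $\varepsilon_1$; H\"older with a self-improving exponent $\epsilon\downarrow 0$ together with the reverse H\"older bound~\eqref{est:RH-b} (now the boundary version) and the partial BMO bound $\fint\theta_1\le\delta$ produces the factor $\delta$; and the fundamental inequality~\eqref{Fund} converts the monotonicity quantity into $|\nabla v-\nabla V|^p$. Choosing $\varepsilon_1$ and $\varepsilon_2$ in terms of $\delta$ and absorbing gives~\eqref{est:v-w-b}.

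The main obstacle is really the boundary reverse H\"older~\eqref{est:RH-b}: one must make sure that the Caccioppoli and Sobolev--Poincar\'e constants used in the Gehring bootstrap do not blow up as the Reifenberg deviation $\delta$ shrinks, and that the zero-extension of $V$ across the Reifenberg-flat portion of the boundary is admissible in the weak formulation. Once this is established, the comparison step is a mechanical repetition of the interior argument, since the partial BMO condition enters in the same way and the only formal change from Lemma~\ref{lem:RH} is replacing $B_{\varrho}$ and $B_{2\varrho}$ by $\Omega_{\varrho}$ and $\Omega_{2\varrho}$.
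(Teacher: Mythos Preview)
Your proposal matches the paper's own treatment: the paper does not give a detailed proof of Lemma~\ref{lem:RH-b}, it simply states that the argument of Lemma~\ref{lem:RH} carries over to the boundary under the Reifenberg flatness assumption, with references to~\cite{BW2, MP12, MPTNsub} for the boundary reverse H{\"o}lder step. Your sketch of flattening the boundary, running Caccioppoli/Sobolev--Poincar\'e and Gehring for~\eqref{est:RH-b}, and then repeating the interior comparison for~\eqref{est:v-w-b}, is exactly that programme.

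There is one small slip worth fixing. In your rearranged identity the error term carries $|\nabla v|^{p-1}$, so the H{\"o}lder step requires bounding $\fint_{\Omega_{2\varrho}}|\theta_1|^{p/(p-1)}|\nabla v|^p$, and for this you would need a reverse H{\"o}lder inequality for $\nabla v$, not for $\nabla V$. You instead invoke~\eqref{est:RH-b}, which is for $\nabla V$. In the paper's proof of Lemma~\ref{lem:RH} the identity is written with $\nabla V$ on the right precisely so that~\eqref{est:RH} can be applied. Either route works---$v$ also solves a homogeneous equation and has its own higher integrability---but as written your argument mixes the two: either swap $\nabla v$ for $\nabla V$ in the error term (matching the interior proof), or cite a reverse H{\"o}lder estimate for $v$ in place of~\eqref{est:RH-b}.
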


\section{Weighted level-set approaches}
\label{sec:level-set}

The idea of our approach in this paper is to take advantages of weighted fractional maximal distributions to establish the ``good-$\lambda$'' level-set inequalities. Therefore, the purpose of this section is to give some inequalities associated with the WFMDs. It is worth emphasizing that the construction of these inequalities is the key technique to prove global regularity estimates in the spirit of WFMDs.

Given $\omega \in \mathbf{A}_{\infty}$,  $\xi \in \Omega$ and $\varrho>0$. In what follows,   for $f \in L^1_{\mathrm{loc}}(\Omega)$ we will define the measurable set $\mathcal{W}_{f}(\lambda; B_{\varrho}(\xi))$ as follows
\begin{align}\label{def:Wf}
\mathcal{W}_{f}(\lambda; B_{\varrho}(\xi)) := \left\{x \in \Omega: \ |f(x)|> \lambda\right\} \cap B_{\varrho}(\xi). 
\end{align}
For simplicity of notation, when the open ball $B_{\varrho}(\xi)$ contains $\Omega$ we will write $\mathcal{W}_{f}(\lambda)$ instead of $\mathcal{W}_{f}(\lambda; B_{\varrho}(\xi))$. Moreover, we remind that the distribution function $\mathbf{D}^{\omega}_{f}$ mentioned in this section is defined as in~\eqref{eq:def-dG}.

\begin{lemma}\label{lem:LevSet-1}
For every $\varepsilon>0$ and $a>0$, one can find $\sigma = \sigma(\varepsilon,a)>0$ such that if there exists $\xi_1 \in \Omega$ satisfying ${\mathbf{M}}_{\alpha}(|\mathbb{F}|^p)(\xi_1) \le \sigma\lambda$ for some $\lambda>0$ then there holds
\begin{align}\label{eq:LS-1}
\mathbf{D}^{\omega}_{\mathbf{M}_{\alpha}(|\nabla u|^p)}(\varepsilon^{-a}\lambda) \le \varepsilon \omega(B_{r_0}).
\end{align}
\end{lemma}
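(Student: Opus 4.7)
\textbf{Proof proposal for Lemma~\ref{lem:LevSet-1}.} The strategy is an absolute smallness bound: under the hypothesis that $\mathbf{M}_\alpha(|\mathbb{F}|^p)(\xi_1)\le \sigma\lambda$ at some single point $\xi_1\in\Omega$, the entire data $\mathbb{F}$ is controlled globally, hence so is $\nabla u$ via an energy estimate, and the level set $\{\mathbf{M}_\alpha(|\nabla u|^p)>\varepsilon^{-a}\lambda\}$ has Lebesgue measure that can be made arbitrarily small. The weighted measure is then tamed with the $\mathbf{A}_\infty$ property. Since $\varepsilon$ and $a$ are treated as free parameters, the statement is purely an ``absolute bound'' lemma, not a comparison lemma, so there is no need to invoke the full machinery of Section~\ref{sec:comparison}.

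\textbf{Step 1 (global data bound).} Since $\Omega$ is bounded and $\xi_1\in\Omega$, choose $\varrho_0:=2\,\mathrm{diam}(\Omega)$ so that $\Omega\subset B_{\varrho_0}(\xi_1)$. Extending $\mathbb{F}$ by zero outside $\Omega$ and applying the definition of $\mathbf{M}_\alpha$ at the admissible radius $\varrho_0$ gives
\begin{equation*}
\varrho_0^{\alpha-n}\int_{\Omega}|\mathbb{F}|^p\,dx\;\le\;C\,\mathbf{M}_\alpha(|\mathbb{F}|^p)(\xi_1)\;\le\;C\sigma\lambda,
\end{equation*}
so $\int_\Omega|\mathbb{F}|^p\,dx\le C\,\sigma\lambda\,(\mathrm{diam}\,\Omega)^{n-\alpha}$, with $C$ depending only on $n$.

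\textbf{Step 2 (energy estimate for the double-obstacle problem).} I would plug a standard admissible test function into \eqref{eq:DOP} (for instance $\varphi=\max\{\psi_1,\min\{0,\psi_2\}\}$, which satisfies the obstacle constraint and vanishes on $\partial\Omega$ thanks to $\psi_1\le 0\le\psi_2$ on $\partial\Omega$), and then use \eqref{eq:A1-DOP}-\eqref{eq:A2-DOP}, \eqref{eq:B-DOP}, Poincar\'e-Sobolev, and Young's inequality in the same manner as in the proof of Lemma~\ref{lem:u-v-DOP}. This yields
\begin{equation*}
\int_\Omega|\nabla u|^p\,dx\;\le\;C\int_\Omega|\mathbb{F}|^p\,dx\;\le\;C\sigma\lambda\,(\mathrm{diam}\,\Omega)^{n-\alpha},
\end{equation*}
with $C=C(\texttt{data})$.

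\textbf{Step 3 (weak-type estimate on the level set).} Apply Lemma~\ref{lem:bound-M} with $s=1$ to the function $f=|\nabla u|^p\chi_\Omega$ (note $\alpha\cdot 1<n$), at the level $\varepsilon^{-a}\lambda$:
\begin{equation*}
\bigl|\mathcal{W}_{\mathbf{M}_\alpha(|\nabla u|^p)}(\varepsilon^{-a}\lambda)\bigr|
\;\le\;\Bigl(\tfrac{C\varepsilon^a}{\lambda}\!\int_\Omega|\nabla u|^p\,dx\Bigr)^{\!\frac{n}{n-\alpha}}
\;\le\;C\,(\varepsilon^a\sigma)^{\frac{n}{n-\alpha}}(\mathrm{diam}\,\Omega)^n.
\end{equation*}
In particular, the $\lambda$ dependence cancels cleanly, which is the point of using $s=1$.

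\textbf{Step 4 (from Lebesgue to weighted measure).} Fix a ball $B_R$ with $R=C\,\mathrm{diam}(\Omega)$ containing $\Omega\cup B_{r_0}$. The $\mathbf{A}_\infty$ bound in Definition~\ref{def:Muck} applied to $\mathcal{W}:=\mathcal{W}_{\mathbf{M}_\alpha(|\nabla u|^p)}(\varepsilon^{-a}\lambda)\subset B_R$ yields
\begin{equation*}
\omega(\mathcal{W})\;\le\;c_0\Bigl(\tfrac{|\mathcal{W}|}{|B_R|}\Bigr)^{\!\nu}\omega(B_R).
\end{equation*}
Because every $\mathbf{A}_\infty$ weight is doubling with constant depending on $[\omega]_{\mathbf{A}_\infty}$, iterating doubling $\lceil\log_2(R/r_0)\rceil$ times gives $\omega(B_R)\le C(\texttt{data})\,\omega(B_{r_0})$. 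Combining with Step 3,
\begin{equation*}
\omega(\mathcal{W})\;\le\;C\,(\varepsilon^a\sigma)^{\frac{n\nu}{n-\alpha}}\omega(B_{r_0}).
\end{equation*}

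\textbf{Step 5 (choice of $\sigma$).} Choose $\sigma=\sigma(\varepsilon,a)>0$ small enough so that $C(\varepsilon^a\sigma)^{n\nu/(n-\alpha)}\le\varepsilon$, i.e. $\sigma\le C^{-(n-\alpha)/(n\nu)}\varepsilon^{(n-\alpha)/(n\nu)-a}$; this is always feasible for any $a,\varepsilon>0$ by taking $\sigma$ sufficiently small (the exponent of $\varepsilon$ may be negative but that only forces $\sigma$ to be smaller). With this choice \eqref{eq:LS-1} follows.

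The main obstacle I anticipate is the choice of admissible test function in Step 2: for the double-obstacle problem one cannot simply test with $\varphi=0$, and one must verify that the chosen competitor $\varphi$ with $\psi_1\le\varphi\le\psi_2$ produces the clean bound $\|\nabla u\|_{L^p}^p\lesssim\|\mathbb{F}\|_{L^p}^p$ without additional boundary terms. Everything else — the scaling in Step 3, and the passage from Lebesgue to $\omega$-measure in Step 4 — is essentially mechanical once the right radii are fixed.
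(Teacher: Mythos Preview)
Your proof is correct and follows essentially the same route as the paper: global control of $\int_\Omega|\mathbb{F}|^p$ from the single point $\xi_1$, an energy bound for $\nabla u$, the weak-type bound from Lemma~\ref{lem:bound-M} with $s=1$, and then the $\mathbf{A}_\infty$ property to pass to $\omega$-measure. The one difference is precisely the point you flagged as an obstacle: the paper obtains $\int_\Omega|\nabla u|^p\le C\int_\Omega|\mathbb{F}|^p$ not by testing~\eqref{eq:DOP} directly, but by invoking Lemma~\ref{lem:u-v-DOP} with $B\supset\Omega$ and $v\equiv 0$ (the unique solution of the homogeneous Dirichlet problem on $\Omega$ with zero boundary data), which yields $\fint_\Omega|\nabla u|^p\le\varepsilon\fint_\Omega|\nabla u|^p+C(\varepsilon)\fint_\Omega|\mathbb{F}|^p$ and hence the desired bound after absorption---so the admissible-competitor issue you worried about never arises.
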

\begin{proof}
Thanks to Lemma~\ref{lem:bound-M} and Lemma~\ref{lem:u-v-DOP} with $B \supset \Omega$ and $v \equiv 0$, from the definition of the set $\mathcal{W}_{\mathbf{M}_{\alpha}}$ in~\eqref{def:Wf} it gives us
\begin{align}\label{est-3.1}
\left|\mathcal{W}_{\mathbf{M}_{\alpha}(|\nabla u|^p)}(\varepsilon^{-{a}}\lambda)\right| \le \left(\frac{C}{\varepsilon^{-{a}} \lambda}\int_{\Omega}{|\nabla u|^p dx}\right)^{\frac{n}{n-\alpha}} \le \left(\frac{C}{\varepsilon^{-{a}} \lambda}\int_{\Omega}{|\mathbb{F}|^p dx}\right)^{\frac{n}{n-\alpha}}.
\end{align}
Recall that $\xi_1 \in \Omega$ satisfying ${\mathbf{M}}_{\alpha}(|\mathbb{F}|^p)(\xi_1) \le \sigma\lambda$ (the value of $\sigma$ will be clarified later), it enables us to cover $\Omega$ by an open ball centered at $\xi_1 \in \Omega$ and radius $r = 2\mathrm{diam}(\Omega)$, it leads to
\begin{align}\label{est:LS-1}
\int_{\Omega} |\mathbb{F}|^p dx \le C r^{n-\alpha} \left(r^{\alpha} \fint_{B_r(\xi_1)} |\mathbb{F}|^p dx \right) \le  C r^{n-\alpha}  {\mathbf{M}}_{\alpha}(|\mathbb{F}|^p)(\xi_1) \le C r^{n-\alpha} \sigma \lambda.
\end{align}
Substituting~\eqref{est:LS-1} into~\eqref{est-3.1}, there holds
\begin{align}\nonumber
\left|\mathcal{W}_{\mathbf{M}_{\alpha}(|\nabla u|^p)}(\varepsilon^{-{a}}\lambda)\right|  \le C (\sigma\varepsilon^a)^{\frac{n}{n-\alpha}} r^n \le C (\mathrm{diam}(\Omega)/r_0)^n (\sigma\varepsilon^a)^{\frac{n}{n-\alpha}}  |B_{r_0}|,
\end{align}
which implies from the definitions of Muckenhoupt weight $\omega$ and function $\mathbf{D}^{\omega}_{\mathbf{M}_{\alpha}(|\nabla u|^p)}$ that
\begin{align}\nonumber
\mathbf{D}^{\omega}_{\mathbf{M}_{\alpha}(|\nabla u|^p)}(\varepsilon^{-{a}}\lambda) & \le c_0\left(\frac{\left|\mathcal{W}_{\mathbf{M}_{\alpha}(|\nabla u|^p)}(\varepsilon^{-{a}}\lambda)\right|}{|B_{r_0}|}\right)^{\nu} \omega(B_{r_0}) \\ \label{est:LS-2}
& \le C (\mathrm{diam}(\Omega)/r_0)^{n\nu} (\sigma\varepsilon^a)^{\frac{n \nu}{n-\alpha}}  \omega(B_{r_0}).
\end{align}
Let us take $\sigma$ depending on $\varepsilon$, $a$ and $\texttt{data}$ in~\eqref{est:LS-2} such that
\begin{align}\label{est:beta}
0 < C (\mathrm{diam}(\Omega)/r_0)^{n\nu} (\sigma\varepsilon^a)^{\frac{n \nu}{n-\alpha}} <  \varepsilon,
\end{align}
to conclude~\eqref{eq:LS-1} and finish the proof.
\end{proof}

\begin{lemma}\label{lem:LevSet-2}
Let $a>0$ and $\xi_2 \in B_{\varrho}(\xi)$ satisfying $\mathbf{M}_{\alpha}(|\nabla u|^p)(\xi_2)  \le \lambda$. Then one can find $\xi_0 \in \overline{\Omega}$ and $k \in \mathbb{N}$ such that the following inequality
\begin{align}\label{eq:LS-2}
\mathbf{D}^{\omega}_{\mathbf{M}_{\alpha}(|\nabla u|^p)}(\varepsilon^{-{a}}\lambda; B_{\varrho}(\xi)) \le \mathbf{D}^{\omega}_{\mathbf{M}_{\alpha}(\chi_{B_{k\varrho}(\xi_0)}|\nabla u|^p)}(\varepsilon^{-{a}}\lambda; B_{\varrho}(\xi)),
\end{align}
holds for every $\varepsilon \in \left(0,3^{-\frac{n}{a}}\right)$.
\end{lemma}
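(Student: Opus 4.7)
The plan is to choose $\xi_0=\xi$ and $k=3$ (any $k\ge 2$ will do), and then show that the set where $\mathbf{M}_\alpha(|\nabla u|^p)>\varepsilon^{-a}\lambda$ is included in the corresponding set for the truncated function, so that the inequality on weighted distribution functions follows from monotonicity of $\omega$ with respect to set inclusion.

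The first step is the key geometric observation. Fix any $y\in B_\varrho(\xi)$ with $\mathbf{M}_\alpha(|\nabla u|^p)(y)>\varepsilon^{-a}\lambda$. Since $\xi_2\in B_\varrho(\xi)$ as well, the triangle inequality gives $|y-\xi_2|<2\varrho$, so for every $r>0$ one has $B_r(y)\subset B_{r+2\varrho}(\xi_2)$. Setting $\tilde r=r+2\varrho$ and pulling out the volumes, a direct computation yields
\begin{align*}
r^\alpha\fint_{B_r(y)}|\nabla u|^p\,dx
\le \Bigl(\tfrac{\tilde r}{r}\Bigr)^{n-\alpha}\tilde r^\alpha\fint_{B_{\tilde r}(\xi_2)}|\nabla u|^p\,dx
\le \Bigl(1+\tfrac{2\varrho}{r}\Bigr)^{n-\alpha}\mathbf{M}_\alpha(|\nabla u|^p)(\xi_2)
\le \Bigl(1+\tfrac{2\varrho}{r}\Bigr)^{n-\alpha}\lambda,
\end{align*}
using the hypothesis $\mathbf{M}_\alpha(|\nabla u|^p)(\xi_2)\le\lambda$ in the last step.

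Next, I would use the constraint $\varepsilon\in(0,3^{-n/a})$ to rule out large radii. If $r\ge\varrho$, then $1+2\varrho/r\le 3$, and since $\varepsilon^{-a}>3^n\ge 3^{n-\alpha}$, the above bound gives
\begin{align*}
r^\alpha\fint_{B_r(y)}|\nabla u|^p\,dx \le 3^{n-\alpha}\lambda<\varepsilon^{-a}\lambda.
\end{align*}
Consequently, the supremum defining $\mathbf{M}_\alpha(|\nabla u|^p)(y)>\varepsilon^{-a}\lambda$ cannot be realised by radii $r\ge\varrho$; there must exist some $r^*<\varrho$ with $(r^*)^\alpha\fint_{B_{r^*}(y)}|\nabla u|^p\,dx>\varepsilon^{-a}\lambda$. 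For such $r^*$, every $z\in B_{r^*}(y)$ satisfies $|z-\xi|\le|z-y|+|y-\xi|<2\varrho<3\varrho$, so $B_{r^*}(y)\subset B_{3\varrho}(\xi)=B_{k\varrho}(\xi_0)$, and therefore $|\nabla u|^p=\chi_{B_{k\varrho}(\xi_0)}|\nabla u|^p$ on $B_{r^*}(y)$. This upgrades the strict inequality to $\mathbf{M}_\alpha(\chi_{B_{k\varrho}(\xi_0)}|\nabla u|^p)(y)>\varepsilon^{-a}\lambda$.

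Finally, I would collect the conclusion: the two preceding steps prove the pointwise set inclusion
\begin{align*}
\mathcal{W}_{\mathbf{M}_\alpha(|\nabla u|^p)}(\varepsilon^{-a}\lambda;B_\varrho(\xi))\subset \mathcal{W}_{\mathbf{M}_\alpha(\chi_{B_{k\varrho}(\xi_0)}|\nabla u|^p)}(\varepsilon^{-a}\lambda;B_\varrho(\xi)),
\end{align*}
and applying $\omega$ to both sides immediately yields \eqref{eq:LS-2}. The only delicate point I anticipate is the bookkeeping of constants used to calibrate the range of admissible $\varepsilon$: one must check that the threshold $3^{-n/a}$ is exactly what is needed so that a radius $r\ge\varrho$ is forced to produce an average strictly below $\varepsilon^{-a}\lambda$. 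Everything else is a soft geometric/measure-theoretic argument.
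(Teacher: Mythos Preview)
Your proof is correct and follows essentially the same mechanism as the paper's: bound the large-radius averages by comparing with the maximal function at $\xi_2$, deduce that only radii $r<\varrho$ can contribute once $\varepsilon^{-a}>3^n$, and then observe that all such small balls lie in a fixed ball $B_{k\varrho}(\xi_0)$, yielding the set inclusion and hence the distribution inequality.

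The one difference worth flagging is your choice of $\xi_0$ and $k$. You take $\xi_0=\xi$ and $k=3$, which is perfectly adequate for the lemma \emph{as stated}. The paper instead distinguishes two cases: if $B_{8\varrho}(\xi)\subset\Omega$ it sets $\xi_0=\xi$, $k=2$; otherwise it picks $\xi_0\in\partial\Omega$ with $|\xi_0-\xi|\le 8\varrho$ and $k=16$. This dichotomy is not needed for Lemma~5.2 itself, but it is exactly what the proof of Lemma~5.3 relies on: there one builds the nested balls $B_i=B_{2^{i-1}k\varrho}(\xi_0)$ and applies either the interior reverse H\"older estimate (Lemma~4.2) or its boundary counterpart (Lemma~4.3), and the latter requires the centre to lie on $\partial\Omega$. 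So while your argument proves the stated existence claim more cleanly, be aware that the paper is really smuggling an additional structural property of $\xi_0$ into the proof, and you would need to reinstate that case distinction when you move on to Lemma~5.3.
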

\begin{proof}
For every $\zeta \in B_{\varrho}(\xi)$, it is easy to check that $B_{r}(\zeta) \subset B_{3r}(\xi_2)$ for all $r \ge \varrho$, which allows us take into account assumption ${\mathbf{M}}_{\alpha}(|\nabla u|^p)(\xi_2)  \le \lambda$ to find
\begin{align*}
\sup_{r \ge \varrho} r^{\alpha} {\fint_{B_{r}(\zeta)}{|\nabla u|^p dx}} \le 3^n\sup_{r \ge \varrho}{ r^{\alpha} \fint_{B_{3r}(\xi_2)}{|\nabla u|^p dx}} \le 3^{n-\alpha} {\mathbf{M}}_{\alpha}(|\nabla u|^p)(\xi_2) \le 3^n \lambda.
\end{align*}
Therefore we may conclude that 
$${\mathbf{M}}_{\alpha}(|\nabla u|^p)(\zeta)  \le \max \left\{  \sup_{0< r < \varrho}{r^{\alpha} \fint_{B_{r}(\zeta)}{|\nabla u|^p dx}} ; \  3^n \lambda \right\}, \quad \mbox{ for all } \zeta \in B_{\varrho}(\xi).$$ 
If we choose $\varepsilon_0 = 3^{-\frac{n}{a}}$ then for every $\varepsilon \in (0,\varepsilon_0)$, one has
\begin{align}\label{est:res11}
\mathcal{W}_{\mathbf{M}_{\alpha}(|\nabla u|^p)}(\varepsilon^{-{a}}\lambda; B_{\varrho}(\xi)) = \left\{ \sup_{0< r < \varrho}{r^{\alpha} \fint_{B_{r}(\zeta)}{|\nabla u|^p dx}} > \varepsilon^{-{a}}\lambda \right\} \cap B_{\varrho}(\xi).
\end{align}
If $B_{8\varrho}(\xi) \subset\Omega$ let us take $\xi_0 = \xi$ and $k = 2$. Otherwise, if $B_{8\varrho}(\xi) \cap \partial\Omega \neq \emptyset$ one can find $\xi_0 \in \partial \Omega$ such that $|\xi_0 - \xi| = \mathrm{dist}(\xi,\partial \Omega) \le 8\varrho$, then we choose $k = 16$. With this choice, one can see that 
$$B_{r}(\zeta) \subset B_{2\varrho}(\xi) \subset B_{k\varrho}(\xi_0), \quad \mbox{ for any } \ 0 < r < \varrho \  \mbox{ and } \ \zeta \in B_{\varrho}(\xi).$$ 
For this reason, we only need to replace the integral $\fint_{B_{r}(\zeta)}{|\nabla u|^p dx}$ in~\eqref{est:res11} by the other one $\fint_{B_{r}(\zeta)}{\chi_{B_{k\varrho}(\xi_0)}|\nabla u|^p dx}$ in order to obtain~\eqref{eq:LS-2} from~\eqref{est:res11}.
\end{proof}

\begin{lemma}\label{lem:LevSet-3}
For every  $0 < a < \frac{2}{\nu}\left(1 - \frac{\alpha}{n}\right)$, one can find a constant $\varepsilon_0 = \varepsilon_0(\texttt{data}) \in \left(0, 3^{-\frac{n}{a}}\right)$ and numbers $\sigma = \sigma(a,\varepsilon)>0$, $\delta = \delta(a,\varepsilon)$ such that if $(\mathcal{A},\Omega)$ satisfies assumption $(\mathbb{H})^{r_0,\delta}$ and there are $\xi_2, \, \xi_3 \in B_{\varrho}(\xi)$ satisfying
\begin{align}\label{eq:x2}
{\mathbf{M}}_{\alpha}(|\nabla u|^p)(\xi_2) \le \lambda \ \mbox{ and } \
{\mathbf{M}}_{\alpha}(|\mathbb{F}|^p)(\xi_3) \le \sigma\lambda,
\end{align}
for some $\lambda>0$ then for every $\varepsilon \in (0,\varepsilon_0)$ there holds
\begin{align}\label{eq:iigoal}
\mathbf{D}^{\omega}_{\mathbf{M}_{\alpha}(|\nabla u|^p)}(\varepsilon^{-{a}}\lambda; B_{\varrho}(\xi)) < \varepsilon \omega(B_{\varrho}(\xi)).
\end{align} 
\end{lemma}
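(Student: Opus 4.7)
The plan is a good-$\lambda$ style argument: reduce to a truncated maximal function via Lemma~\ref{lem:LevSet-2}, then exploit the comparison chain from Section~\ref{sec:comparison} and the weak-type bound of Lemma~\ref{lem:bound-M}, and finally transfer the resulting Lebesgue estimate to weighted measure using the Muckenhoupt $\mathbf{A}_\infty$ property of $\omega$.

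First I would apply Lemma~\ref{lem:LevSet-2} to produce $\xi_0 \in \overline{\Omega}$ and $k \in \{2,16\}$ reducing matters to estimating $\mathbf{D}^{\omega}_{\mathbf{M}_{\alpha}(\chi_{B_{k\varrho}(\xi_0)}|\nabla u|^p)}(\varepsilon^{-a}\lambda; B_\varrho(\xi))$. Since $\xi_2, \xi_3 \in B_\varrho(\xi) \subset B_{(k+1)\varrho}(\xi_0)$, the very definition of $\mathbf{M}_\alpha$ combined with assumption~\eqref{eq:x2} delivers the baseline averages
\[
\fint_{\Omega_{k\varrho}(\xi_0)} |\nabla u|^p\, dz \le C\lambda \varrho^{-\alpha}, \qquad \fint_{\Omega_{k\varrho}(\xi_0)} |\mathbb{F}|^p\, dz \le C\sigma\lambda \varrho^{-\alpha}.
\]

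Next I would build the comparison functions: on $\Omega \cap B_{k\varrho}(\xi_0)$ let $v$ solve the homogeneous problem~\eqref{eq:v-local} via Lemma~\ref{lem:u-v-DOP}, and then let $V$ solve the frozen-coefficient problem on a suitable sub-ball $B' \subset B_{k\varrho}(\xi_0)$ via Lemma~\ref{lem:RH} (interior case, $k = 2$) or Lemma~\ref{lem:RH-b} (boundary case, $k = 16$, justified by $(\mathbb{H})^{r_0,\delta}$), extended by $v$ outside. Chaining these with the baseline averages, for any $\varepsilon_1 > 0$ one obtains
\[
\fint_{\Omega_{k\varrho}(\xi_0)} |\nabla u - \nabla V|^p\, dz \le C(\varepsilon_1 + \sigma + \delta)\,\lambda\varrho^{-\alpha},
\]
while the reverse Hölder inequalities~\eqref{est:RH}/\eqref{est:RH-b} give, for any prescribed $\gamma \ge 1$,
\[
\Bigl(\fint_{B'} |\nabla V|^{\gamma p}\, dz\Bigr)^{1/\gamma} \le C\lambda\varrho^{-\alpha}.
\]

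I then split $|\nabla u|^p \le C_p(|\nabla V|^p + |\nabla u - \nabla V|^p)$ and use subadditivity of $\mathbf{M}_\alpha$. Applying Lemma~\ref{lem:bound-M} with $s = 1$ to the $|\nabla u - \nabla V|^p$ piece and with some $s \in (1, n/\alpha)$ to the $|\nabla V|^p$ piece (so that the reverse Hölder bound can be invoked), this yields
\[
\bigl|\mathcal{W}_{\mathbf{M}_\alpha(\chi_{B_{k\varrho}(\xi_0)}|\nabla u|^p)}(\varepsilon^{-a}\lambda;B_\varrho(\xi))\bigr|\le C\Bigl[\varepsilon^{\frac{asn}{n-\alpha s}} + (\varepsilon_1+\sigma+\delta)^{\frac{n}{n-\alpha}}\, \varepsilon^{\frac{an}{n-\alpha}}\Bigr]\varrho^n.
\]
Invoking the $\mathbf{A}_\infty$ property $[\omega]_{\mathbf{A}_\infty} = (c_0,\nu)$ promotes this to a weighted inequality with the exponents $\tfrac{asn}{n-\alpha s}$ and $\tfrac{an}{n-\alpha}$ each multiplied by $\nu$. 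Choosing $s$ close enough to $n/\alpha$ makes $\tfrac{asn\nu}{n-\alpha s} > 1$ regardless of how small $a > 0$ is, so that the first term falls below $\tfrac{\varepsilon}{2}\omega(B_\varrho(\xi))$ once $\varepsilon < \varepsilon_0(\texttt{data},a)$; the hypothesis $a < \tfrac{2(n-\alpha)}{n\nu}$ keeps $1 - \tfrac{an\nu}{n-\alpha} > -1$, so I can calibrate $\varepsilon_1, \sigma, \delta$ as a small positive power of $\varepsilon$ that makes the second term also $< \tfrac{\varepsilon}{2}\omega(B_\varrho(\xi))$, completing~\eqref{eq:iigoal}.

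\textbf{Main obstacle.} The delicate point is the simultaneous bookkeeping in the last step: the parameters $s, \delta, \sigma, \varepsilon_1$ have to be tuned to $\varepsilon$ and $a$ so that both decay terms beat $\tfrac{\varepsilon}{2}$, and the window $0 < a < \tfrac{2(n-\alpha)}{n\nu}$ is precisely what leaves room for this calibration. A subsidiary technical nuisance is that $V$ lives only on a sub-ball of $B_{k\varrho}(\xi_0)$; extending $V := v$ on the complementary region (where the mean of $|\nabla v|^p$ is already controlled by $C\lambda\varrho^{-\alpha}$ via Lemma~\ref{lem:u-v-DOP}) handles this without losing the comparison estimate.
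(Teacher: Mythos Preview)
Your proposal is correct and follows essentially the same route as the paper: localize via Lemma~\ref{lem:LevSet-2}, build the comparison chain $u \to v \to V$ from Section~\ref{sec:comparison}, use the weak-type bound for $\mathbf{M}_\alpha$ at two different integrability exponents, and lift to weighted measure through $\mathbf{A}_\infty$. A few points where the paper's execution differs from yours are worth noting. First, the paper works on three nested balls $B_1 \subset B_2 \subset B_3$ with $B_i = B_{2^{i-1}k\varrho}(\xi_0)$: it solves for $v$ on $B_3$ and for $V$ on $B_2$, so that the reverse H\"older estimate~\eqref{est:RH} already holds on the full truncation ball $B_1$; this sidesteps the extension issue you flag, since no patching of $V$ by $v$ is needed. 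Second, the paper splits into three pieces $\mathrm{I},\mathrm{II},\mathrm{III}$ corresponding to $|\nabla u - \nabla v|$, $|\nabla v - \nabla V|$, $|\nabla V|$ rather than your two; either works. Third, and most relevant to your ``main obstacle'': the paper locates the constraint $a < \tfrac{2}{\nu}(1-\tfrac{\alpha}{n})$ in the $|\nabla V|^p$ term, not the difference term. It takes the explicit value $\theta = \frac{2}{a\nu + 2\alpha/n}$, so that $\tfrac{an\theta\nu}{n-\alpha\theta} = 2$ and the resulting $C\varepsilon^2$ beats $\varepsilon$; the hypothesis on $a$ is exactly what guarantees $\theta > 1$, which is what Lemma~\ref{lem:bound-M} requires. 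Your reading (that the constraint buys room in the $|\nabla u - \nabla V|^p$ term) is off: that term can always be handled by sending $\varepsilon_1,\sigma,\delta$ to a sufficiently high power of $\varepsilon$, with no restriction on $a$.
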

\begin{proof}
Lemma~\ref{lem:LevSet-2} under condition~\eqref{eq:x2} gives us the existence of $\xi_0 \in \overline{\Omega}$ and $k \le 16$ such that~\eqref{eq:LS-2} holds for every $\varepsilon  \in \left(0, 3^{-\frac{n}{a}}\right)$. We define $B_{i} = B_{2^{i-1}k\varrho}(\xi_0)$ for $i = 1,2,3$. Suppose that $v \in u + W_0^{1,p}(B_3)$ solves the  problem
\begin{equation}\nonumber 
 \mathcal{L}(v)  =  0 \ \mbox{ in } B_3 \cap \Omega, \ \mbox { and } \ v  =  u  \ \mbox{ on } \partial (B_3 \cap \Omega).
\end{equation}
Lemma~\ref{lem:u-v-DOP} gives us the comparison estimate between $\nabla v$ and $\nabla u$ as below
\begin{align}\label{est:100}
\fint_{B_3} |\nabla u - \nabla v|^p dx \le \varepsilon_1  \fint_{B_3} |\nabla u|^p dx +  C(\varepsilon_1) \fint_{B_3} |\mathbb{F}|^p dx,
\end{align}
for any $\varepsilon_1>0$. By setting of $B_3$, it is possible to claim that 
$$B_{2\varrho}(\xi) \subset B_1 = B_{k\varrho}(\xi_0) \subset B_3 \subset B_{64\varrho}(\xi_0) \subset B_{72\varrho}(\xi) \subset B_{73\varrho}(\xi_2) \cap B_{73\varrho}(\xi_3),$$ 
and of course $|B_3| \sim \varrho^n$. Combining with~\eqref{eq:x2}, one has
\begin{align}\label{eq:com-2}
\fint_{B_3}{|\nabla u|^pdx} \le \frac{|B_{73\varrho}(\xi_2)|}{|B_3|} \fint_{B_{73\varrho}(\xi_2)}{|\nabla u|^pdx} \le C \varrho^{-\alpha} {\mathbf{M}}_{\alpha}(|\nabla u|^p)(\xi_2) \le C \lambda \varrho^{-\alpha},
\end{align}
and similarly
\begin{align}\label{eq:com-22}
\fint_{B_3}{|\mathbb{F}|^pdx} \le \frac{|B_{73\varrho}(\xi_3)|}{|B_3|} \fint_{B_{73\varrho}(\xi_3)}{|\mathbb{F}|^pdx} \le C \varrho^{-\alpha} {\mathbf{M}}_{\alpha}(|\mathbb{F}|^p)(\xi_3) \le C \sigma \lambda \varrho^{-\alpha}.
\end{align}
Substituting~\eqref{eq:com-2} and~\eqref{eq:com-22} into~\eqref{est:100}, we find
\begin{align}\label{eq:u-v-B3}
\fint_{B_3} |\nabla u - \nabla v|^p dx \le C \left[\varepsilon_1 +  C(\varepsilon_1) \sigma\right]\lambda \varrho^{-\alpha}.
\end{align}
Let us now consider $V \in v + W_0^{1,p}(B_2)$ solving the next problem
\begin{equation}\nonumber 
\begin{cases} -\mbox{div} (\overline{\mathcal{A}}_{B^*_1}(x_1,\nabla V)) & = \ 0, \quad  \quad \mbox{ in } B_2 \cap \Omega,\\ 
\hspace{1.2cm} V & = \ v, \qquad \mbox{ on } \partial (B_2 \cap \Omega).\end{cases}
\end{equation}
Lemma~\ref{lem:RH} and~\ref{lem:RH-b} state that if $(\mathcal{A},\Omega)$ satisfies assumption $(\mathbb{H})^{r_0,\delta}$ then $\nabla V$ satisfies the following reverse H{\"o}lder's inequality
\begin{align}\label{est:101}
\left(\fint_{B_1} |\nabla V|^{\gamma p} dx\right)^{\frac{1}{\gamma}} \le  C\fint_{B_2}|\nabla V|^pdx, \quad \mbox{ for all } \gamma \ge 1,
\end{align}
and the comparison estimate with $\nabla v$ as below
\begin{align}\label{est:102}
\fint_{B_1} |\nabla v - \nabla V|^p dx \le  C \delta \fint_{B_2} |\nabla v|^p dx.
\end{align}
On the other hand, from inequality~\eqref{eq:LS-2} in Lemma~\ref{lem:LevSet-2} and the definition of Muckenhoupt weight $\omega \in \mathbf{A}_{\infty}$, there holds
\begin{align}\nonumber
\mathbf{D}^{\omega}_{\mathbf{M}_{\alpha}(|\nabla u|^p)}(\varepsilon^{-{a}}\lambda; B_{\varrho}(\xi)) & \le \mathbf{D}^{\omega}_{\mathbf{M}_{\alpha}(\chi_{B_1}|\nabla u|^p)}(\varepsilon^{-{a}}\lambda; B_{\varrho}(\xi)) \\ \label{est:301}
& \le c_0 \left[\frac{\left|\mathcal{W}_{\mathbf{M}_{\alpha}(\chi_{B_1}|\nabla u |^p)}( \varepsilon^{-{a}}\lambda; B_{\varrho}(\xi)) \right|}{|B_{\varrho}(\xi)|}\right]^{\nu} \omega(B_{\varrho}(\xi)),
\end{align}
where $(c_0,\nu) = [\omega]_{\mathbf{A}_{\infty}}$. By using an elementary inequality
one deduces from~\eqref{est:301} that
\begin{align}\label{eq:estV-1}
\mathbf{D}^{\omega}_{\mathbf{M}_{\alpha}(|\nabla u|^p)}(\varepsilon^{-{a}}\lambda; B_{\varrho}(\xi)) \le C \left(\mathrm{I} + \mathrm{II} + \mathrm{III}\right)^{\nu} \varrho^{-n\nu} \omega(B_{\varrho}(\xi)),
\end{align}
where $\mathrm{I}$, $\mathrm{II}$ and $\mathrm{III}$ are given by
\begin{align}\nonumber
& \mathrm{I} := \left|\left\{{\mathbf{M}}_{\alpha}(\chi_{B_1}|\nabla u - \nabla v|^p)> 3^{-p}\varepsilon^{-{a}}\lambda \right\} \right|, \\ \nonumber
& \mathrm{II} := \left|\left\{{\mathbf{M}}_{\alpha}(\chi_{B_1}|\nabla v - \nabla V|^p)> 3^{-p}\varepsilon^{-{a}}\lambda \right\} \right|, \\ \nonumber
& \mathrm{III} := \left|\left\{{\mathbf{M}}_{\alpha}(\chi_{B_1}|\nabla V|^p)> 3^{-p}\varepsilon^{-{a}}\lambda \right\} \right|. 
\end{align}
Thanks to Lemma~\ref{lem:bound-M} with $s = 1$, there holds
\begin{align}\nonumber
\mathrm{I} &\le \left(\frac{C}{3^{-p}\varepsilon^{-{a}}\lambda} \int_{B_1} |\nabla u - \nabla v|^p dx \right)^{\frac{n}{n-\alpha}}  \le \left(\frac{C |B_3|}{\varepsilon^{-{a}}\lambda} \fint_{B_3} |\nabla u - \nabla v|^p dx \right)^{\frac{n}{n-\alpha}},
\end{align}
which with~\eqref{eq:u-v-B3} implies to
\begin{align}\label{est:I}
\mathrm{I}  \le C \left[\varepsilon^{a}(\varepsilon_1 + C(\varepsilon_1) \sigma)\varrho^{n-\alpha}\right]^{\frac{n}{n-\alpha}} \le C \varrho^n \left[\varepsilon^{a} \varepsilon_1  + C(\varepsilon_1) \varepsilon^{a} \sigma \right]^{\frac{n}{n-\alpha}} .
\end{align}
We now apply this argument again to estimate $\mathrm{II}$, by combining Lemma~\ref{lem:bound-M} with $s=1$ and~\eqref{est:102} to arrive
\begin{align*}
\mathrm{II} &\le \left( \frac{C}{3^{-p}\varepsilon^{-{a}}\lambda} \int_{B_1} |\nabla v - \nabla V|^p dx \right)^{\frac{n}{n-\alpha}} \le \left(\frac{C\delta |B_1|}{\varepsilon^{-{a}}\lambda} \fint_{B_2} |\nabla v|^p dx\right)^{\frac{n}{n-\alpha}}.
\end{align*}
Taking into account~\eqref{eq:x2} and~\eqref{eq:u-v-B3}, one has
\begin{align*}
\fint_{B_2} |\nabla v|^p dx \le C\left(\fint_{B_3} |\nabla u|^p dx + \fint_{B_3} |\nabla u - \nabla v|^p dx\right) \le C \left[1 + \varepsilon_1 + C(\varepsilon_1)\sigma \right]\lambda \varrho^{-\alpha}.
\end{align*}
Both previous inequalities give us
\begin{align}\label{est:II}
\mathrm{II} &\le C  \left[\delta \varepsilon^{a} \left(1 + \varepsilon_1 + C(\varepsilon_1)\sigma\right)\right]^{\frac{n}{n-\alpha}} \varrho^n.
\end{align}
For any $\theta>1$, the last term $\mathrm{III}$ can be bounded by using Lemma~\ref{lem:bound-M} with $s = {\theta}>1$ and combining the reverse H{\"o}lder inequality~\eqref{est:101} with $\gamma = {\theta}$ to have
\begin{align}\label{eq:com-3}
\mathrm{III} &\le \left( \frac{C|B_1|}{\left(3^{-p}\varepsilon^{-{a}}\lambda\right)^{{\theta}}} \fint_{B_1} |\nabla V|^{p{\theta}} dx \right)^{\frac{n}{n-\alpha {\theta}}} \le \left[ \frac{C|B_1|}{\left(\varepsilon^{-{a}}\lambda\right)^{{\theta}}} \left( \fint_{B_2} |\nabla V|^{p} dx \right)^{{\theta}}\right]^{\frac{n}{n-\alpha {\theta}}}.
\end{align}
From \eqref{est:102} with $\delta \in (0,1)$, it guarantees that
\begin{align*}
\fint_{B_2} |\nabla V|^{p} dx \le C \left(\fint_{B_2} |\nabla v|^{p} dx  + \fint_{B_2} |\nabla v - \nabla V|^{p} dx\right) \le C \fint_{B_2} |\nabla v|^{p} dx,
\end{align*}
which allows us to arrive the following conclusion by collecting the previous computation 
\begin{align*}
\fint_{B_2} |\nabla V|^{p} dx \le C \left[1 + \varepsilon_1 + C(\varepsilon_1)\sigma\right]\lambda \varrho^{-\alpha}.
\end{align*}
Substituting this estimate into~\eqref{eq:com-3}, one can find
\begin{align}\nonumber
\mathrm{III} & \le C \left[ \frac{\varrho^n}{(\varepsilon^{-a}\lambda)^{{\theta}}} \left(1 + \varepsilon_1 + C(\varepsilon_1)\sigma\right)^{{\theta}} \lambda^{{\theta}} \varrho^{-\alpha {\theta}} \right]^{\frac{n}{n-\alpha {\theta}}} \\ \label{est:III}
& \le C \left[\varepsilon^a\left(1 + \varepsilon_1 + C(\varepsilon_1) \sigma\right)\right]^{\frac{n{\theta}}{n-\alpha {\theta}}} \varrho^n.
\end{align}
Plugging estimations of $\mathrm{I}$, $\mathrm{II}$ and $\mathrm{III}$ from~\eqref{est:I}, \eqref{est:II} and~\eqref{est:III} respectively, one gets from~\eqref{eq:estV-1} that
\begin{align}\nonumber
\mathbf{D}^{\omega}_{\mathbf{M}_{\alpha}(|\nabla u|^p)}(\varepsilon^{-{a}}\lambda; B_{\varrho}(\xi)) & \le C \left\{ \left[\varepsilon^{a}\left(\varepsilon_1 + C(\varepsilon_1) \sigma \right)\right]^{\frac{n}{n-\alpha}} \varrho^n \right. \\ \nonumber
& \qquad \quad \left.  + \left[\delta \varepsilon^{a} \left(1 + \varepsilon_1 + C(\varepsilon_1)\sigma\right)\right]^{\frac{n}{n-\alpha}} \varrho^n \right. \\ \label{est:VI}
& \qquad \quad \left.  + \left[\varepsilon^{a}\left(1 + \varepsilon_1 + C(\varepsilon_1)\sigma\right)\right]^{\frac{n{\theta}}{n-\alpha {\theta}}} \varrho^n\right\}^{\nu} \varrho^{-n\nu} \omega(B_{\varrho}(\xi)).
\end{align}
In the inequality~\eqref{est:VI}, it is possible to choose $\sigma$ satisfying~\eqref{est:beta} and $\varepsilon_1, \delta$ such that
\begin{align*}
(\delta\varepsilon^a)^{\frac{n\nu}{n-\alpha}} < \varepsilon^{2}, \quad \varepsilon_1 = \delta \in (0,1), \quad  0 < \sigma < \varepsilon_1 [C(\varepsilon_1)]^{-1},
\end{align*} 
which guarantees that
\begin{align}\label{est:VII}
\mathbf{D}^{\omega}_{\mathbf{M}_{\alpha}(|\nabla u|^p)}(\varepsilon^{-{a}}\lambda; B_{\varrho}(\xi)) & \le C \left(\varepsilon^{\frac{2}{\nu}} + \varepsilon^{\frac{an{\theta}}{n-\alpha {\theta}}} \right)^{\nu} \omega(B_{\varrho}(\xi)).
\end{align}
The most interesting point here is that assumption  $0 < a < \frac{2}{\nu}\left(1 - \frac{\alpha}{n}\right)$ allows us to take ${\theta} = \frac{2}{a\nu + \frac{2\alpha}{n}}>1$ which reduces to ${\frac{an{\theta}}{n-\alpha {\theta}}} = \frac{2}{\nu}$. With this choice of $\theta$, inequality~\eqref{est:VII} becomes to
\begin{align}\nonumber
\mathbf{D}^{\omega}_{\mathbf{M}_{\alpha}(|\nabla u|^p)}(\varepsilon^{-{a}}\lambda; B_{\varrho}(\xi)) & \le C \varepsilon^2 \omega(B_{\varrho}(\xi)),
\end{align}
and it follows to~\eqref{eq:iigoal} for $\varepsilon$ small enough.  The proof is then complete.
\end{proof}

\section{Proofs of main results}
\label{sec:proofs}

Our strategy now becomes clear and with aid of preliminary lemmas and estimates proved in previous sections, we are ready to prove the main results.

\subsection{Proof of Theorem \ref{theo-A}}
\label{sec:proofA}
\begin{proof}[Proof of Theorem~\ref{theo-A}]
One can see that the inequality~\eqref{est:theo-A} is a sequence of the following inequality
\begin{align}\label{eq:mainlambda} 
\omega\left(\mathcal{W}_{\mathbf{M}_{\alpha}(|\nabla u|^p)}(\varepsilon^{-a}\lambda) \cap \left(\mathcal{W}_{\mathbf{M}_{\alpha}(|\mathbb{F}|^p)}(\sigma\lambda)\right)^c\right)
  \leq C \varepsilon \left(\mathcal{W}_{\mathbf{M}_{\alpha}(|\nabla u|^p)}(\lambda) \right).
\end{align}
Therefore we have just determined $\varepsilon_0 =\varepsilon_0(\texttt{data}) \in (0,1)$ such that: for any $\lambda>0$, $\varepsilon \in (0,\varepsilon_0)$ and $a \in \left(0,\frac{2}{\nu}\left(1-\frac{\alpha}{n}\right)\right)$ we can find $\delta = \delta(a,\varepsilon)>0$ and $\sigma = \sigma(a,\varepsilon)>0$ valid~\eqref{eq:mainlambda}, under assumptions $\Omega$ is $(r_0,\delta)$-Reifenberg and $[\mathcal{A}]^{1,r_0} \le \delta$ for some $r_0>0$. 

Let us rewrite the inequality~\eqref{eq:mainlambda} as $\omega(\mathcal{S}^{\lambda}_{\varepsilon}) \le C \varepsilon \omega (\mathcal{R}^{\lambda})$, where $\mathcal{S}^{\lambda}_{\varepsilon}$ and $\mathcal{R}^{\lambda}$ present the sets appeared on the left and right hand side respectively.  The proof is straightforward from the covering Lemma~\ref{lem:cover-lem} for two sets $\mathcal{S}^{\lambda}_{\varepsilon}$ and $\mathcal{R}^{\lambda}$. Hence we proceed to show that two hypotheses of Lemma~\ref{lem:cover-lem} are satisfied.

Obviously, we can prove~\eqref{eq:mainlambda} with assumption $\mathcal{S}^{\lambda}_{\varepsilon} \neq \emptyset$ which allows us to have $\xi_1 \in \Omega$ such that ${\mathbf{M}}(|\mathbb{F}|^p)(\xi_1) \le \sigma\lambda$. Given $r_0>0$, Lemma~\ref{lem:LevSet-1} gives us a suitable value of $\sigma = \sigma(\varepsilon)$ that is valid the following inequality
\begin{align}\nonumber
\omega(\mathcal{S}^{\lambda}_{\varepsilon}) \le \mathbf{D}^{\omega}_{\mathbf{M}_{\alpha}(|\nabla u|^p)}(\varepsilon^{-a}\lambda) \le \varepsilon \omega(B_{r_0}).
\end{align}

On the other hand, Lemma~\ref{lem:LevSet-3} shows that if there exist $\xi_2 \in B_{\varrho}(\xi)\cap \Omega \cap (\mathcal{R}^{\lambda})^c$ and $\xi_3 \in \mathcal{S}^{\lambda}_{\varepsilon} \cap B_{\varrho}(\xi)$ which deduce to
\begin{align}\nonumber
{\mathbf{M}}(|\nabla u|^p)(\xi_2) \le \lambda \ \mbox{ and } \
{\mathbf{M}}(|\mathbb{F}|^p)(\xi_3) \le \varepsilon^{b}\lambda,
\end{align}
then one can find $\varepsilon_0 \in (0,1)$, $\sigma = \sigma(a,\varepsilon)>0$ and $\delta = \delta(a,\varepsilon)$ such that
\begin{align*}
\omega(\mathcal{S}^{\lambda}_{\varepsilon} \cap B_{\varrho}(\xi)) \le \mathbf{D}^{\omega}_{\mathbf{M}_{\alpha}(|\nabla u|^p)}(\varepsilon^{-{a}}\lambda; B_{\varrho}(\xi)) < \varepsilon \omega(B_{\varrho}(\xi)),
\end{align*}
for all $\varepsilon \in (0,\varepsilon_0)$ provided $\Omega$ is $(r_0,\delta)$-Reifenberg and $[\mathcal{A}]^{1,r_0} \le \delta$. For this reason, the second hypothesis of Lemma~\ref{lem:cover-lem} can be directly obtained by contradiction. 

Therefore, the inequality~\eqref{eq:mainlambda} holds for $0 < \varepsilon < \varepsilon_0$, and the conclusion of weighted distribution inequality~\eqref{est:theo-A} also follows.
\end{proof}

\subsection{Proof of Theorem \ref{theo-B} and Corollary \ref{cor-B}}
\label{sec:proofB}
\begin{proof}[Proof of Theorem \ref{theo-B}]
Here, our attention has been focused on the case of $L^{q,s}_{\omega}(\Omega)$ for $0< q < \infty$ and $0<s < \infty$. The proof for the case $s = \infty$ is also obtained with a slight changing of calculation. \\
Firstly, for any $\alpha \in [0,n)$ and $(\nu,c_0) = [\omega]_{\mathbf{A}_{\infty}}$ let us take
$$0 < a < \min\left\{\frac{2}{\nu}\left(1-\frac{\alpha}{n}\right); \frac{1}{q}\right\}.$$ 
Theorem~\ref{theo-A} ensures the existence of $\varepsilon_0 \in (0,1)$, $\delta>0$ and $\sigma > 0$ such that if assumption $(\mathbb{H})^{r_0,\delta}$ of $(\mathcal{A},\Omega)$ is satisfied, then the inequality~\eqref{est:theo-A} holds for every $0< \varepsilon <\varepsilon_0$ and $\lambda>0$. More precisely, there holds
\begin{align}\label{est:BB-1}
\mathbf{D}^{\omega}_{\mathbf{M}_{\alpha}(|\nabla u|^p)}(\varepsilon^{-a}\lambda) \le C \varepsilon \mathbf{D}^{\omega}_{\mathbf{M}_{\alpha}(|\nabla u|^p)}(\lambda)  + \mathbf{D}^{\omega}_{\mathbf{M}_{\alpha}(|\mathbb{F}|^p)}(\sigma \lambda).
\end{align}
By applying~\eqref{est:BB-1} and performing several times of changing variables, one gets that
\begin{align}\nonumber
\|\mathbf{M}_{\alpha}(|\nabla u|^p)\|^s_{L^{q,s}_{\omega}(\Omega)} & =  \varepsilon^{-as}q\int_0^\infty \lambda^{s-1} \left[\mathbf{D}^{\omega}_{\mathbf{M}_{\alpha}(|\nabla u|^p)}(\varepsilon^{-a}\lambda)\right]^{\frac{s}{q}}  {d\lambda} \\ \nonumber
  & \le C  \varepsilon^{-as}q \int_0^\infty \lambda^{s-1} \left[\varepsilon \mathbf{D}^{\omega}_{\mathbf{M}_{\alpha}(|\nabla u|^p)}(\lambda) \right]^{\frac{s}{q}} {d\lambda} \\ \nonumber
& \hspace{3cm} +  C \varepsilon^{-as} q \int_0^\infty \lambda^{s-1} \left[\mathbf{D}^{\omega}_{\mathbf{M}_{\alpha}(|\mathbb{F}|^p)}(\sigma\lambda)\right]^{\frac{s}{q}} {d\lambda}  \\ \nonumber
& \le  C \varepsilon^{\frac{s}{q}-as} \|\mathbf{M}_{\alpha}(|\nabla u|^p)\|^s_{L^{q,s}_{\omega}(\Omega)} +  C  \sigma^{s} \varepsilon^{-as} \|\mathbf{M}_{\alpha}(|\mathbb{F}|^p)\|^s_{L^{q,s}_{\omega}(\Omega)},
\end{align}
which leads to the following estimate with an elementary inequality
\begin{align}\label{est:5b}
\|\mathbf{M}_{\alpha}(|\nabla u|^p)\|_{L^{q,s}_{\omega}(\Omega)}   & \le  C \varepsilon^{\frac{1}{q}-a} \|\mathbf{M}_{\alpha}(|\nabla u|^p)\|_{L^{q,s}_{\omega}(\Omega)} +  C  \sigma \varepsilon^{-a} \|\mathbf{M}_{\alpha}(|\mathbb{F}|^p)\|_{L^{q,s}_{\omega}(\Omega)}.
\end{align}
Since $\frac{1}{q}- a > 0$ we can fix $\varepsilon$ in~\eqref{est:5b} small enough to conclude~\eqref{est:theo-B}. 

To prove the point-wise estimate~\eqref{est:theo-B-2} related to the Riesz potential defined as in~\eqref{def:Riesz}, we refer to~\cite[Lemma 4.2]{MPTNsub} for the statement: if the following inequality
\begin{align}\label{cond:w}
\int_{\mathbb{R}^n} \varphi(x)  d\omega(x) \le C \int_{\mathbb{R}^n} \psi(x)  d\omega(x), 
\end{align}
holds for any $\omega \in \mathcal{A}_1$ and $\beta \in (0,n)$, then 
\begin{align}\label{eq:wA1}
\mathbf{I}_{\beta}\varphi(x) \le C \mathbf{I}_{\beta}\psi(x), \quad \mbox{ a.e. in } \  \mathbb{R}^n.
\end{align}
A nice feature of the weighted Lorentz space $L^{q,s}_{\omega}(\Omega)$ is that it becomes the weighted Lebesgue space $L^{q}_{\omega}(\Omega)$ in the special case $q=s$. Hence, for $0< t < \infty$ let us apply~\eqref{est:theo-B} for $q=s=t$, one obtains that
\begin{align*}
\int_{\mathbb{R}^n} \chi_{\Omega} |\mathbf{M}_{\alpha}(|\nabla u|^p)|^t  d\omega(z) \le C\int_{\mathbb{R}^n} \chi_{\Omega} |\mathbf{M}_{\alpha}(|\mathbb{F}|^p)|^t d\omega(z), 
\end{align*}
which is valid~\eqref{cond:w} with $\varphi =  \chi_{\Omega} |\mathbf{M}_{\alpha}(|\nabla u|^p)|^t$ and $\psi = \chi_{\Omega} |\mathbf{M}_{\alpha}(|\mathbb{F}|^p)|^t$. Therefore, one deduces to~\eqref{est:theo-B-2} from~\eqref{eq:wA1} directly and the proof is finished. 
\end{proof}

\subsection{Proof of Theorem \ref{theo:L-O}}
\label{sec:proofC}
\begin{proof}
Since $\Phi \in \Delta_2$, it is well-know that one can find a constant $p_1>1$ satisfying 
\begin{align}\label{Phi-p1}
\Phi(t\lambda) \le C t^{p_1} \Phi(\lambda), \qquad \mbox{ for any } t \ge 1 \ \mbox{ and } \ \lambda>0.
\end{align}
For every $0 < q < \infty$ and $0< s  < \infty$, let us choose
\begin{align}\label{choose-a}
0 < a < \min\left\{\frac{2}{\nu}\left(1 - \frac{\alpha}{n}\right); \frac{1}{p_1 q}\right\}.
\end{align}
Thanks to Theorem~\ref{theo-A}, for any $\lambda>0$ and $\varepsilon$ small enough there exist $\delta = \delta(a,\alpha,\varepsilon)>0$ and $\sigma = \sigma(a,\alpha,\varepsilon)>0$ such that if assumption $(\mathbb{H})^{r_0,\delta}$ of $(\mathcal{A},\Omega)$ is satisfied then there holds
\begin{align}\label{est:O-1}
\mathbf{D}^{\omega}_{\mathbf{M}_{\alpha}(|\nabla u|^p)}(\varepsilon^{-a}\lambda) \le C \varepsilon \mathbf{D}^{\omega}_{\mathbf{M}_{\alpha}(|\nabla u|^p)}(\lambda)  + \mathbf{D}^{\omega}_{\mathbf{M}_{\alpha}(|\mathbb{F}|^p)}(\sigma \lambda).
\end{align}
Let us replace $\lambda$ in~\eqref{est:O-1} by $\varepsilon^{a} \Phi^{-1}(\lambda)$, it becomes to
\begin{align}\nonumber
\mathbf{D}^{\omega}_{\mathbf{M}_{\alpha}(|\nabla u|^p)}(\Phi^{-1}(\lambda)) \le C \varepsilon \mathbf{D}^{\omega}_{\mathbf{M}_{\alpha}(|\nabla u|^p)}(\varepsilon^{a}\Phi^{-1}(\lambda))  + \mathbf{D}^{\omega}_{\mathbf{M}_{\alpha}(|\mathbb{F}|^p)}(\sigma \varepsilon^{a}\Phi^{-1}(\lambda)),
\end{align} 
which is equivalent to
\begin{align} \nonumber 
\mathbf{D}^{\omega}_{\Phi(\mathbf{M}_{\alpha}(|\nabla u|^p))}(\lambda) \le  C \varepsilon \mathbf{D}^{\omega}_{\Phi(\varepsilon^{-a}\mathbf{M}_{\alpha}(|\nabla u|^p))}(\lambda)  + \mathbf{D}^{\omega}_{\Phi(\sigma^{-1}\varepsilon^{-a}\mathbf{M}_{\alpha}(|\mathbb{F}|^p))}(\lambda).
\end{align} 
We may~\eqref{Phi-p1} on this inequality to arrive
\begin{align}\label{est:O-3}
\mathbf{D}^{\omega}_{\Phi(\mathbf{M}_{\alpha}(|\nabla u|^p))}(\lambda) \le  C \varepsilon \mathbf{D}^{\omega}_{\Phi(\mathbf{M}_{\alpha}(|\nabla u|^p))}(C\varepsilon^{ap_1}\lambda)  + \mathbf{D}^{\omega}_{\Phi(\mathbf{M}_{\alpha}(|\mathbb{F}|^p))}(C\sigma^{p_1}\varepsilon^{ap_1}\lambda).
\end{align}
Let us now use~\eqref{est:O-3} into the norm expression of the weighted Lorentz space to arrive
\begin{align}\nonumber
\|\Phi(\mathbf{M}_{\alpha}(|\nabla u|^p))\|_{L^{q,s}_{\omega}(\Omega)}^s & = q\int_0^{\infty} \lambda^{s-1} \left[\mathbf{D}^{\omega}_{\Phi(\mathbf{M}_{\alpha}(|\nabla u|^p))}(\lambda)\right]^{\frac{s}{q}} {d\lambda}  \\ \nonumber
& \le C \varepsilon^{\frac{s}{q}} q \int_0^{\infty} \lambda^{s-1} \left[ \mathbf{D}^{\omega}_{\Phi(\mathbf{M}_{\alpha}(|\nabla u|^p))}(C\varepsilon^{ap_1}\lambda)\right]^{\frac{s}{q}} {d\lambda} \\ \nonumber
& \qquad \qquad + C q\int_0^{\infty} \lambda^{s-1} \left[\mathbf{D}^{\omega}_{\Phi(\mathbf{M}_{\alpha}(|\mathbb{F}|^p))}(C\sigma^{p_1}\varepsilon^{ap_1}\lambda)\right]^{\frac{s}{q}} {d\lambda}.
\end{align}
By changing of variables, we may write
\begin{align}\nonumber
\|\Phi(\mathbf{M}_{\alpha}(|\nabla u|^p))\|_{L^{q,s}_{\omega}(\Omega)}^s & \le C \varepsilon^{\frac{s}{q}-sap_1} q \int_0^{\infty} \lambda^{s-1} \left[ \mathbf{D}^{\omega}_{\Phi(\mathbf{M}_{\alpha}(|\nabla u|^p))}(\lambda)\right]^{\frac{s}{q}} {d\lambda} \\ \nonumber
& \qquad \qquad + C \sigma^{-sp_1}\varepsilon^{-sap_1} q\int_0^{\infty} \lambda^{s-1} \left[\mathbf{D}^{\omega}_{\Phi(\mathbf{M}_{\alpha}(|\mathbb{F}|^p))}(\lambda)\right]^{\frac{s}{q}} {d\lambda} \\ \nonumber
& \le C \varepsilon^{\frac{s}{q}-sap_1} \|\Phi(\mathbf{M}_{\alpha}(|\nabla u|^p))\|_{L^{q,s}_{\omega}(\Omega)}^s  \\ \nonumber
& \qquad \qquad + C \sigma^{-sp_1}\varepsilon^{-sap_1} \|\Phi(\mathbf{M}_{\alpha}(|\mathbb{F}|^p))\|_{L^{q,s}_{\omega}(\Omega)}^s,
\end{align}
which implies to
\begin{align}\nonumber
\|\Phi(\mathbf{M}_{\alpha}(|\nabla u|^p))\|_{L^{q,s}_{\omega}(\Omega)} & \le C  \varepsilon^{\frac{1}{q}-ap_1} \|\Phi(\mathbf{M}_{\alpha}(|\nabla u|^p))\|_{L^{q,s}_{\omega}(\Omega)} \\ \label{est:L-O-4}
  & \hspace{2cm} +  C \sigma^{-p_1}\varepsilon^{-ap_1} \|\Phi(\mathbf{M}_{\alpha}(|\mathbb{F}|^p))\|_{L^{q,s}_{\omega}(\Omega)}. 
\end{align}
With the value of $a$ chosen as in~\eqref{choose-a}, one can fix $\varepsilon$ in~\eqref{est:L-O-4} small enough to observe that
\begin{align}\label{est:L-O-5}
\|\Phi(\mathbf{M}_{\alpha}(|\nabla u|^p))\|_{L^{q,s}_{\omega}(\Omega)} & \le   C_{*}   \|\Phi(\mathbf{M}_{\alpha}(|\mathbb{F}|^p))\|_{L^{q,s}_{\omega}(\Omega)}. 
\end{align}
By scaling ${\lambda}^{-1}{|\nabla u|^p}$, ${\lambda}^{-1}{|\mathbb{F}|^p}$ on weighted distribution inequality~\eqref{est:O-1} and using the convexity of $\Phi$, we obtain a similar estimate as in~\eqref{est:L-O-5} for any $\lambda>0$. More precisely, one gets that
\begin{align}\nonumber 
\left\|\Phi\left((C_{*}\lambda)^{-1}\mathbf{M}_{\alpha}(|\nabla u|^p)\right)\right\|_{L^{q,s}_{\omega}(\Omega)} & \le {C_{*}}^{-1} \left\|\Phi\left(\lambda^{-1}\mathbf{M}_{\alpha}(|\nabla u|^p)\right)\right\|_{L^{q,s}_{\omega}(\Omega)} \\ \nonumber
  & \le \left\|\Phi\left(\lambda^{-1}\mathbf{M}_{\alpha}(|\mathbb{F}|^p)\right)\right\|_{L^{q,s}_{\omega}(\Omega)}, \quad \forall \lambda >0.
\end{align}
This estimate yields that $H({\nabla u}) \subset \frac{1}{C_{*}} H({\mathbb{F}})$ which implies to~\eqref{eq:L-O}, where
\begin{align*}
H(f) = \left\{\lambda >0: \ \left\|\Phi\left({\lambda}^{-1}{\mathbf{M}_{\alpha}(|f|^p)}\right)\right\|_{L^{q,s}_{\omega}(\Omega)} \le 1\right\},
\end{align*}
with $f = \nabla u$ or $f = \mathbb{F}$. A slight changing of computation allows us to prove~\eqref{est:L-O-5} and~\eqref{eq:L-O} even in the case $s = \infty$. 
\end{proof}

\end{document}